\numberwithin{equation}{section}
\definecolor{MyBlue}{cmyk}{1,0.13,0,0.63}
\definecolor{MyGreen}{cmyk}{0.91,0,0.88,0.52}
\newcommand{\mylinkcolor}{MyBlue}
\newcommand{\mycitecolor}{MyGreen}
\newcommand{\myurlcolor}{webbrown}
\def\@endtheorem{\endtrivlist}
\theoremstyle{plain}
\newtheorem{thm}{Theorem}[section]
\newtheorem{lem}[thm]{Lemma}
\newtheorem{prop}[thm]{Proposition}
\newtheorem{coro}[thm]{Corollary}
\theoremstyle{definition}
\newtheorem{defn}[thm]{Definition}
\newtheorem{remark}[thm]{Remark}
\newtheorem{example}[thm]{Example}
\newtheoremstyle{note}
{3pt}
{3pt}
{\bfseries}
{\parindent}
{\bfseries\itshape}
{:}
{.5em}
{}
\theoremstyle{note}
\newtheorem*{Question}{Question}
\newtheorem*{Problem}{Problem}
\newtheorem*{Note}{Note}
\renewcommand{\eqref}[1]{\labelcref{#1}}
\crefname{thm}{Theorem}{Theorems}
\crefname{lem}{Lemma}{Lemmas}
\crefname{prop}{Proposition}{Propositions}
\crefname{coro}{Corollary}{Corollaries}
\crefname{defn}{Definition}{Definitions}
\crefname{example}{Example}{Examples}
\crefname{remark}{Remark}{Remarks}
\setlist{topsep=4pt plus 2pt minus 2pt,partopsep=0pt,itemsep=2pt plus 2pt minus 2pt,parsep=0.5\parskip}
\let\OLDthebibliography\thebibliography
\renewcommand\thebibliography[1]{
  \addcontentsline{toc}{section}{\refname}
  \OLDthebibliography{CPRS06b}
  \setlength{\parskip}{0pt}
  \setlength{\itemsep}{0pt plus 0.3ex}
}
\newcommand{\N}{\mathbb{N}}
\newcommand{\R}{\mathbb{R}}
\newcommand{\C}{\mathbb{C}}
\newcommand{\Z}{\mathbb{Z}}
\newcommand{\A}{\mathcal{A}}
\newcommand{\mH}{\mathcal{H}}
\newcommand{\D}{\mathcal{D}}
\newcommand{\E}{\mathcal{E}}
\newcommand{\B}{\mathcal{B}}
\DeclareMathOperator{\Dom}{Dom}
\DeclareMathOperator{\End}{End}
\DeclareMathOperator{\Lip}{Lip}
\DeclareMathOperator{\Span}{span}
\DeclareMathOperator{\Ran}{Ran}
\DeclareMathOperator*{\esssup}{ess\,sup}
\renewcommand{\bar}[1]{\overline{#1}}
\newcommand{\CCliff}{{\mathbb{C}\mathrm{l}}}
\newcommand{\KK}{\textnormal{KK}}
\newcommand{\til}[1]{\tilde{#1}}
\newcommand{\hotimes}{\mathbin{\hat\otimes}}
\newcommand{\hot}{\hotimes}
\newcommand{\hodot}{\mathbin{\hat\odot}}
\newcommand{\la}{\langle}
\newcommand{\ra}{\rangle}
\newcommand{\bV}{V}
\newcommand{\mattwo}[4]{
  \left(\!\!\!\begin{array}{c@{~}c}#1&#2\\ #3&#4\\\end{array}\!\!\!\right)
}
\title{Locally bounded perturbations and (odd) unbounded KK-theory}
\author{
Koen van den Dungen%
\footnote{\emph{Present address:} Mathematisches Institut der Universit\"at Bonn, Endenicher Allee 60, D-53115 Bonn, \texttt{kdungen@uni-bonn.de}}\\[4mm]
{\normalsize SISSA (Scuola Internazionale Superiore di Studi Avanzati)}\\ 
{\normalsize Via Bonomea, 265, 34136 Trieste, Italy}
}
\begin{document}

\maketitle

\begin{abstract}
\noindent
A regular symmetric operator on a Hilbert module is self-adjoint whenever there exists a suitable approximate identity. We say an operator is `locally bounded' if the composition of the operator with each element in the approximate identity is bounded. We prove that the perturbation of a regular self-adjoint operator by a locally bounded symmetric operator is again regular and self-adjoint. We use this result to show that the Kasparov class represented by an unbounded Kasparov module is stable under locally bounded perturbations. As an application, we show that we obtain a converse to the `doubling up' procedure of odd unbounded Kasparov modules. Finally, we discuss perturbations of unbounded Kasparov modules by unbounded multipliers. In particular, we explicitly construct an unbounded multiplier such that (after doubling up the module) the perturbed operator has compact resolvent. 

\vspace{\baselineskip}
\noindent
\emph{Keywords}: regular self-adjoint operators on Hilbert modules; unbounded KK-theory.

\noindent
\emph{Mathematics Subject Classification 2010}: 19K35, 46H25, 47B25. 
\end{abstract}

\section{Introduction}

It is a classical result by Chernoff \cite{Che73} that any symmetric first-order differential operator $\D$ with bounded propagation speed on a complete Riemannian manifold $X$ must be essentially self-adjoint. 
One way to prove this statement (following the argument in \cite[\S1]{GL83}) is by using the fact that there exist compactly supported functions $\phi_k\in C_c^\infty(X,\R)$ (for $k\in\N$), converging pointwise to $1$, such that $[\D,\phi_k] \to 0$, along with the fact that $\phi_k\cdot\Dom\D^* \subset \Dom\D$. 

More abstractly, if $\D$ is a symmetric operator on a Hilbert space $\mH$, we say an approximate identity $\{\phi_k\}_{k\in\N}\subset\B(\mH)$ is \emph{adequate} \cite{MR16} if $\phi_k\cdot\Dom\D^* \subset \Dom\D$ and the commutators $[\D,\phi_k]$ are well-defined and uniformly bounded. 
We view the existence of an adequate approximate identity for $\D$ as a generalisation of the classical assumptions that $\D$ is first-order and has bounded propagation speed, and that the underlying Riemannian manifold is complete. 
We prove in \cref{sec:approx_id} that (as in the classical case) the existence of such an approximate identity implies that $\D$ is essentially self-adjoint. More generally, using the local-global principle \cite{Pie06,KL12}, we can extend this result to Hilbert modules: given a regular symmetric operator $\D$ on a Hilbert $B$-module $E$ and an adequate approximate identity $\{\phi_k\}_{k\in\N}\subset\End_B(E)$, it follows that $\D$ is self-adjoint. 

Let $\D$ again be a symmetric first-order differential operator with bounded propagation speed on a complete Riemannian manifold, and let $T$ be any symmetric zeroth-order operator. 
Since the propagation speed of a differential operator depends only on the principal symbol, we know that $\D+T$ is again essentially self-adjoint, no matter how unbounded the perturbation $T$ might be (for the case of smooth perturbations (`potentials') $T$, this situation was already dealt with by Chernoff \cite{Che73}). 
In \cref{sec:loc_bdd} we provide an abstract analogue of this statement. Note that the restriction of $T$ to a compact subset of the manifold is bounded. 

Abstractly, for a densely defined operator $M$ on a Hilbert module $E$, and for an adequate approximate identity $\{\phi_k\}_{k\in\N}$, we say that $M$ is \emph{locally bounded} if $M\phi_k$ is well-defined and bounded for each $k\in\N$. One of the main results of this article is then that, given the existence of a suitable approximate identity $\{\phi_k\}_{k\in\N}$, the perturbation of a regular self-adjoint operator $\D$ by a locally bounded symmetric operator $M$ is again regular self-adjoint (\cref{thm:reg_sa_sum_loc_bdd}). 
Though local boundedness is of course a strong assumption on the perturbation $M$, the main novelty of this result is that (unlike e.g.\ the well-known Kato-Rellich theorem or W\"ust's theorem) we do not assume any relative bound on the perturbation.

In \cref{sec:Kasp_mod_stab} we apply our result to the framework of noncommutative geometry and unbounded $\KK$-theory. 
We prove that (again given the existence of a suitable approximate identity) a spectral triple, or more generally an unbounded Kasparov module, is stable under locally bounded perturbations. This provides an unbounded analogue of the fact that the class of a bounded Kasparov module is stable under locally compact perturbations (see \cite[Proposition 17.2.5]{Blackadar98}). 

As an application, we will have a look at the odd version(s) of unbounded $\KK$-theory in \cref{sec:odd_KK}. For trivially graded $C^*$-algebras $A$ and $B$, we consider two types of unbounded representatives for a class in the odd $\KK$-theory $\KK^1(A,B) = \KK(A\otimes\CCliff_1,B)$:
\begin{enumerate}
\item an \emph{odd} unbounded Kasparov $A$-$B$-module $(\A,E_B,\D)$ (where the Hilbert module $E$ is trivially graded);
\item an (even) unbounded Kasparov $A\otimes\CCliff_1$-$B$-module $(\A\otimes\CCliff_1,\til E_B,\til\D)$.
\end{enumerate}
Any \emph{odd} unbounded Kasparov $A$-$B$-module $(\A,{}_\pi E_B,\D)$ can straightforwardly be `doubled up' to an (even) unbounded Kasparov $A\otimes\CCliff_1$-$B$-module $(\A\otimes\CCliff_1,{}_{\til\pi}\til E_B,\til\D)$ for which $\til\D$ anti-commutes with the generator of the Clifford algebra $\CCliff_1$ (see \cref{sec:odd_KK}). 
Conversely, however, given an arbitrary (even) unbounded Kasparov $A\otimes\CCliff_1$-$B$-module $(\A\otimes\CCliff_1,{}_{\til\pi}\til E_B,\til\D)$, the operator $\til\D$ does not need to anti-commute with the Clifford generator (we only know that $\til\D$ has bounded graded commutators with the algebra). 
Thus, if we wish to reduce the even module $(\A\otimes\CCliff_1,{}_{\til\pi}\til E_B,\til\D)$ to an \emph{odd} unbounded Kasparov $A$-$B$-module, we need to show that we can replace $\til\D$ by $\til\D' := \frac12(\til\D-e\til\D e)$ (where $e$ denotes the generator of $\CCliff_1$) without changing the underlying class in $\KK$-theory.  
By observing that $\til M := \frac12(\til\D+e\til\D e)$ is locally bounded, it then follows from the stability of unbounded Kasparov modules under locally bounded perturbations that (the closure of) $\til\D' = \til\D - \til M$ indeed represents the same class as $\til\D$. 

Finally, in \cref{sec:unbdd_mult} we consider the natural example of a locally bounded perturbation of an unbounded Kasparov module $(\A,E_B,\D)$ arising from a (symmetric) unbounded multiplier on the (typically non-unital) algebra $\A$. 
If $(\A,E_B,\D)$ is an unbounded Kasparov $A$-$B$-module for a non-unital $C^*$-algebra $A$, then in general the resolvent of $\D$ is only \emph{locally} compact. In practice, it can be much easier to deal with operators whose resolvent is in fact compact. 
In \cref{sec:cpt_res} we give sufficient conditions which ensure that we can find a locally bounded perturbation such that the perturbed operator has compact resolvent. 
In fact, we will construct this locally bounded perturbation explicitly as an unbounded multiplier built from a given adequate approximate identity. More precisely, we show that for a given odd module $(\A,E_B,\D)$ we can explicitly construct an unbounded multiplier on $A$ such that the perturbation of the `doubled up' module $(\A\otimes\CCliff_1,\til E_B,\til\D)$ by this unbounded multiplier has compact resolvent. We provide a similar statement in the even case, where the `doubling up' is based on the isomorphism $\KK(A,B) \simeq \KK^2(A,B) = \KK(A\hot\CCliff_2,B)$.

\subsection*{Acknowledgements}

Many thanks to Bram Mesland and Adam Rennie for interesting discussions and helpful suggestions. Thanks also to Magnus Goffeng for suggesting the question addressed in \cref{sec:cpt_res}. Finally, thanks to the referee for helpful comments and suggestions.

\section{Approximate identities}
\label{sec:approx_id}

Let $B$ be a $\Z_2$-graded $C^*$-algebra. Recall that a $\Z_2$-graded Hilbert $B$-module $E$ is a vector space equipped with a $\Z_2$-graded right action $E\times B\to E$ and with a $B$-valued inner product $\la\cdot|\cdot\ra\colon E\times E\to B$, such that $E$ is complete in the corresponding norm. 
The endomorphisms $\End_B(E)$ are the adjointable linear operators $E\to E$, and the set $\End^0_B(E)$ of compact endomorphisms is given by the closure of the finite rank operators. 
For an operator $T$ on $E$ we write $\deg T=0$ if $T$ is even and $\deg T=1$ if $T$ is odd. 
For a detailed introduction to Hilbert modules and $\Z_2$-gradings, we refer to \cite{Blackadar98,Lance95}. 

A densely defined operator $S$ on $E$ is called \emph{semi-regular} if the adjoint $S^*$ is densely defined. A semi-regular operator $S$ is closable, and we denote its closure by $\bar S$. A semi-regular operator $S$ is called \emph{regular} if $S$ is closed and $1+S^*S$ has dense range. 

If $B=\C$, then a Hilbert $\C$-module is just a Hilbert space $\mH$, and we write $\B(\mH) = \End_\C(\mH)$. In this case, any closed operator on $\mH$ is regular. 

\begin{defn}
A \emph{sequential approximate identity} on a Hilbert $B$-module $E$ is a sequence of self-adjoint operators $\phi_k\in \End_B(E)$ (for $k\in\N$) such that 
$\phi_k$ converges strongly to the identity on $E$. 
\end{defn}
Since $\phi_k\psi\to\psi$ for each $\psi\in E$, we have in particular that $\sup_{k\in\N}\|\phi_k\psi\|<\infty$ for each $\psi\in E$. The uniform boundedness principle then implies that $\sup_{k\in\N}\|\phi_k\|<\infty$. 

The assumption of self-adjointness is only imposed for convenience; in general, one could consider an arbitrary sequence $\{\phi_k\}_{k\in\N}$ which converges \emph{strictly} to the identity (i.e., $\phi_k\psi\to\psi$ and $\phi_k^*\psi\to\psi$ for all $\psi\in E$), and then $\frac12(\phi_k+\phi_k^*)$ gives a self-adjoint approximate identity. 

For any endomorphism $T\in\End_B(E)$ we have that $\phi_kT$ converges strongly to $T$. If $T$ is compact, we in fact have that $\phi_kT$ converges to $T$ in norm (which can be shown by first checking the norm convergence for finite rank operators). Hence, if $\phi_k\in\End_B^0(E)$ for each $k\in\N$, then $\{\phi_k\}_{k\in\N}$ is also a sequential approximate unit in the algebra $\End_B^0(E)$. 

\begin{defn}
Let $\D$ be an unbounded symmetric operator on a Hilbert $B$-module $E$. An \emph{adequate approximate identity for $\D$} is a sequential approximate identity $\{\phi_k\}_{k\in\N}$ on $E$ such that $\phi_k\cdot\Dom\D^*\subset\Dom\bar\D$, $[\bar\D,\phi_k]$ is bounded on $\Dom\D$ for all $k$, and $\sup_{k\in\N}\|\bar{[\bar\D,\phi_k]}\| < \infty$. 
\end{defn}

\begin{remark}
The term \emph{adequate approximate identity} is borrowed from \cite[\S2]{MR16}. 
This notion is weaker than the notion of a bounded approximate unit for the Lipschitz algebra $\Lip(\D)$ (see \cite{MR16} for details). 
\end{remark}

The following lemma shows that the commutators $[\bar\D,\phi_k]$ on $\Dom\bar\D$ and $[\D^*,\phi_k]$ on $\Dom\D^*$ are also bounded, and that their closures equal the closure of $[\bar\D,\phi_k]$ defined on $\Dom\D$. 

\begin{lem}
\label{lem:comm_D*}
Let $\D$ be a symmetric operator on a Hilbert $B$-module $E$, and let $\phi\in\End_B(E)$ be a self-adjoint operator such that $\phi\cdot\Dom\D\subset\Dom\bar\D$ and $[\bar\D,\phi]$ is bounded on $\Dom\D$. 
Then 
\begin{enumerate}
\item $\phi\cdot\Dom\bar\D\subset\Dom\bar\D$, $[\bar\D,\phi]$ is bounded on $\Dom\bar\D$ and $\bar{[\bar\D,\phi]} = \bar{[\bar\D,\phi]|_{\Dom\D}}$; and
\item $\phi\cdot\Dom\D^*\subset\Dom\D^*$ and $\bar{[\D^*,\phi]} = \bar{[\bar\D,\phi]}$. 
\end{enumerate}
\end{lem}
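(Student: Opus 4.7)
The plan is to introduce a single bounded operator $T\in\End_B(E)$, namely the unique extension to all of $E$ of the bounded commutator $[\bar\D,\phi]|_{\Dom\D}$, and to show that both $[\bar\D,\phi]$ on $\Dom\bar\D$ and $[\D^*,\phi]$ on $\Dom\D^*$ are simply restrictions of $T$. The key preliminary is to verify that $T$ is skew-adjoint, i.e.\ $T^*=-T$. For $\xi,\eta\in\Dom\D$, I would use that $\phi$ is self-adjoint and that $\phi\xi,\phi\eta\in\Dom\bar\D$, together with the inclusions $\Dom\D\subset\Dom\bar\D\subset\Dom\D^*$ (recall $\bar\D\subset\D^*$ since $\D$ is symmetric) and the identity $\bar\D^*=\D^*$, to move $\bar\D$ across the inner product on each side and derive $\la[\bar\D,\phi]\xi|\eta\ra=-\la\xi|[\bar\D,\phi]\eta\ra$. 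Density of $\Dom\D$ then promotes this to $T^*=-T$.

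For part (1), given $\psi\in\Dom\bar\D$, I would pick $\psi_n\in\Dom\D$ with $\psi_n\to\psi$ and $\D\psi_n\to\bar\D\psi$. The hypothesis yields $\phi\psi_n\in\Dom\bar\D$ with $\bar\D(\phi\psi_n)=\phi\D\psi_n+T\psi_n$, and the right-hand side converges in norm to $\phi\bar\D\psi+T\psi$ because $\phi$ and $T$ are bounded. Combined with $\phi\psi_n\to\phi\psi$, closedness of $\bar\D$ forces $\phi\psi\in\Dom\bar\D$ and $[\bar\D,\phi]\psi=T\psi$. Hence $[\bar\D,\phi]$ is bounded on $\Dom\bar\D$, and its closure agrees with the closure of $[\bar\D,\phi]|_{\Dom\D}$ (both equal $T$).

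For part (2), I would take $\psi\in\Dom\D^*$ and $\xi\in\Dom\D$ and rewrite
\[
\la\phi\psi|\D\xi\ra=\la\psi|\phi\D\xi\ra=\la\psi|\bar\D(\phi\xi)\ra-\la\psi|T\xi\ra.
\]
Since $\phi\xi\in\Dom\bar\D$ and $\psi\in\Dom\bar\D^*=\Dom\D^*$, the first term becomes $\la\D^*\psi|\phi\xi\ra=\la\phi\D^*\psi|\xi\ra$, while the skew-adjointness converts $-\la\psi|T\xi\ra$ into $\la T\psi|\xi\ra$. The resulting identity $\la\phi\psi|\D\xi\ra=\la\phi\D^*\psi+T\psi|\xi\ra$ shows $\phi\psi\in\Dom\D^*$ with $\D^*(\phi\psi)=\phi\D^*\psi+T\psi$, so $[\D^*,\phi]=T$ on $\Dom\D^*$ and the closures $\bar{[\D^*,\phi]}$ and $\bar{[\bar\D,\phi]}$ both equal $T$.

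The main obstacle is the skew-adjointness step: once $T^*=-T$ is in hand, parts (1) and (2) are routine, but establishing it requires careful bookkeeping of the domain inclusions and of when one is permitted to move $\bar\D$ (versus $\D^*$) across the inner product. The hypothesis $\phi\cdot\Dom\D\subset\Dom\bar\D$, stronger than the merely tautological $\phi\cdot\Dom\D\subset\Dom\D^*$, is precisely what makes these manipulations symmetric in $\xi$ and $\eta$.
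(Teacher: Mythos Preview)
Your proof is correct and follows essentially the same line as the paper: both establish that the bounded extension $T$ of $[\bar\D,\phi]|_{\Dom\D}$ is adjointable with $T^*=-T$, and then use inner-product manipulations to show that the commutators on $\Dom\bar\D$ and on $\Dom\D^*$ coincide with $T$. Your organisation is slightly more streamlined---you read off $[\D^*,\phi]\psi=T\psi$ directly from the defining identity for $\Dom\D^*$, whereas the paper separately verifies closability of $[\D^*,\phi]$---and you spell out the graph-norm argument for part~(1), which the paper cites from \cite{FMR14}.
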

\begin{proof}
The first statement is proven in \cite[Proposition 2.1]{FMR14} for Hilbert spaces, but the proof also works for (semi-regular) operators on Hilbert modules. 
The proof of the second statement is similar and goes as follows. 
First, we observe that $\bar{[\bar\D,\phi]}$ is adjointable (indeed, its adjoint is densely defined on $\Dom\D$ and equal to $-[\bar\D,\phi]$, which is bounded). 
For $\psi\in\Dom\D$ and $\xi\in\Dom\D^*$ we have
$$
\la\phi\xi|\D\psi\ra = \la\xi|\phi\D\psi\ra = \la\xi|(\bar\D\phi-[\bar\D,\phi])\psi\ra = \la(\phi\D^*-[\bar\D,\phi]^*)\xi|\psi\ra ,
$$
which shows that $\phi\xi\in\Dom\D^*$, and hence that $[\D^*,\phi]$ is well-defined on $\Dom\D^*$. 
Restricted to $\Dom\bar\D$ we have $[\D^*,\phi]|_{\Dom\bar\D} = [\bar\D,\phi]$. If $[\D^*,\phi]$ is closable, this means that $\bar{[\D^*,\phi]} \supset \bar{[\bar\D,\phi]}$ and hence that $[\D^*,\phi]$ is bounded and $\bar{[\D^*,\phi]} = \bar{[\bar\D,\phi]}$. 
For $\psi\in\Dom\D$ and $\xi\in\Dom\D^*$ we have
$$
\la\psi|[\D^*,\phi]\xi\ra = \la\psi|(\D^*\phi-\phi\D^*)\xi\ra = \la(\phi\D-\bar\D\phi)\psi|\xi\ra = \la-[\bar\D,\phi]\psi|\xi\ra ,
$$
which means that $\Dom[\D^*,\phi]^* \supset \Dom\D$, which is dense in $E$. 
Thus $[\D^*,\phi]$ is indeed closable, and its closure equals $\bar{[\bar\D,\phi]}$.  
\end{proof}

If $\D$ is a regular self-adjoint operator, then $\{(1+\frac1k\D^2)^{-\frac12}\}_{k\in\N}$ is an adequate approximate identity for $\D$. 
The main reason for introducing the notion of an adequate approximate identity for $\D$ is that a converse statement also holds: if there exists an adequate approximate identity for a regular symmetric operator $\D$, then $\D$ is essentially self-adjoint. We will first prove this in the special case where $\D$ is an operator on a Hilbert space, and for this purpose we recall the following lemma (a proof can be found in e.g.\ \cite[Lemma 1.8.1]{Higson-Roe00}). 

\begin{lem}
\label{lem:op_closure}
Let $T$ be a closable operator on a Hilbert space $\mH$. Then $\psi\in\Dom\bar T$ if and only if there exists a sequence $\{\psi_k\}_{k\in\N}$ in $\Dom T$ such that $\psi_k\to\psi$ while $\|T\psi_k\|$ remains bounded. 
\end{lem}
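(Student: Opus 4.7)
The ``only if'' direction is immediate: if $\psi \in \Dom\bar T$, then by definition of the closure there is a sequence $\{\psi_k\}$ in $\Dom T$ with $\psi_k \to \psi$ and $T\psi_k \to \bar T\psi$, and in particular $\|T\psi_k\|$ is bounded (as any convergent sequence is bounded in norm).

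For the ``if'' direction, suppose $\{\psi_k\} \subset \Dom T$ with $\psi_k \to \psi$ and $\|T\psi_k\| \le C$ for some constant $C$. My plan is to produce an element $\eta \in \mH$ and a sequence in $\Dom T$ whose image under $T$ converges to $\eta$, while the sequence itself still converges to $\psi$; that will give $\psi \in \Dom\bar T$ with $\bar T\psi = \eta$. Since the sequence $\{T\psi_k\}$ is bounded in the Hilbert space $\mH$ and bounded sets in a Hilbert space are weakly sequentially precompact, we may pass to a subsequence (still denoted $\{\psi_k\}$) such that $T\psi_k \rightharpoonup \eta$ weakly for some $\eta \in \mH$.

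The key step is to upgrade weak convergence of $\{T\psi_k\}$ to strong convergence of some modified sequence, while keeping the base points still converging to $\psi$. This is exactly what Mazur's lemma provides: there exist convex combinations
\[
\chi_n \;=\; \sum_{k=n}^{N_n} c_{n,k}\, \psi_k, \qquad c_{n,k}\ge 0,\quad \sum_k c_{n,k}=1,
\]
such that $T\chi_n = \sum_k c_{n,k}\, T\psi_k$ converges to $\eta$ in norm. Since each $\chi_n$ is a finite convex combination of elements of $\Dom T$, we have $\chi_n \in \Dom T$. Moreover, since $\psi_k \to \psi$, the same convex combinations give $\chi_n \to \psi$ in norm (a convex combination of points close to $\psi$ is itself close to $\psi$). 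Thus $\{\chi_n\} \subset \Dom T$ with $\chi_n \to \psi$ and $T\chi_n \to \eta$, which by definition of the closure shows $\psi \in \Dom\bar T$ (and $\bar T\psi = \eta$).

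The main obstacle is the backward direction, and specifically the need to go from mere norm-boundedness of $\{T\psi_k\}$ to actual convergence of the images; the two ingredients that make this work — weak sequential compactness of bounded sets and Mazur's lemma — are genuinely Hilbert-space (or at least reflexive Banach-space) features, which is why this characterisation is stated for Hilbert spaces rather than for general Hilbert modules.
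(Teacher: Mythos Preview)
Your proof is correct. The paper does not give its own argument but cites \cite[Lemma~1.8.1]{Higson-Roe00}; the only hint the paper provides is the remark after \cref{prop:approx_id_sa} that the proof ``uses that every bounded sequence in $\mH$ has a weakly convergent subsequence,'' which your argument does use.

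The standard route (and the one in Higson--Roe) differs from yours in the second half: after extracting a subsequence with $T\psi_k \rightharpoonup \eta$ weakly, one observes directly that for every $\xi\in\Dom T^*$,
\[
\la T^*\xi\mid\psi\ra = \lim_k \la T^*\xi\mid\psi_k\ra = \lim_k \la\xi\mid T\psi_k\ra = \la\xi\mid\eta\ra ,
\]
so $\psi\in\Dom T^{**}=\Dom\bar T$. This avoids Mazur's lemma entirely, at the cost of invoking the identity $\bar T = T^{**}$. Your approach via Mazur is slightly longer but has the advantage that it produces an explicit sequence in $\Dom T$ converging to $\psi$ in the graph norm, and it carries over verbatim to any reflexive Banach space using only the definition of the closure as the graph closure. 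Both arguments rest on the same essential point (weak sequential compactness of bounded sets), which is exactly the feature the paper singles out as the obstruction to generalising the lemma to Hilbert modules.
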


\begin{prop}
\label{prop:approx_id_sa}
Let $\D$ be a symmetric operator on a Hilbert space $\mH$, and suppose we have an adequate approximate identity for $\D$. Then $\D$ is essentially self-adjoint. 
\end{prop}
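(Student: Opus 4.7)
The plan is to show directly that $\Dom\D^* \subseteq \Dom\bar\D$; together with the reverse inclusion (which is automatic from symmetry of $\D$ and closedness of $\D^*$), this gives $\bar\D = \D^*$, i.e.\ essential self-adjointness. The tool for inclusion in $\Dom\bar\D$ is \cref{lem:op_closure}: for a given $\xi\in\Dom\D^*$ I need to exhibit a sequence $\xi_k\in\Dom\bar\D$ with $\xi_k\to\xi$ and $\|\bar\D\xi_k\|$ uniformly bounded.

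The natural candidate is $\xi_k := \phi_k\xi$. By the defining property of an adequate approximate identity, $\phi_k\cdot\Dom\D^*\subset\Dom\bar\D$, so $\xi_k\in\Dom\bar\D$; and since $\phi_k\to 1$ strongly, $\xi_k\to\xi$. It remains to bound $\|\bar\D\xi_k\|$ uniformly in $k$. For this I use \cref{lem:comm_D*}: we have $\phi_k\cdot\Dom\D^*\subset\Dom\D^*$ and $\bar{[\D^*,\phi_k]}=\bar{[\bar\D,\phi_k]}$. On $\Dom\D^*$ I can then write
\[
\bar\D\phi_k\xi \;=\; \D^*\phi_k\xi \;=\; \phi_k\D^*\xi + [\D^*,\phi_k]\xi,
\]
where in the first equality I used $\phi_k\xi\in\Dom\bar\D$ so $\D^*$ and $\bar\D$ agree there. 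Therefore
\[
\|\bar\D\xi_k\| \;\le\; \|\phi_k\|\,\|\D^*\xi\| + \bigl\|\bar{[\bar\D,\phi_k]}\bigr\|\,\|\xi\|,
\]
and both $\sup_k\|\phi_k\|$ and $\sup_k\|\bar{[\bar\D,\phi_k]}\|$ are finite by the definition of an adequate approximate identity (the former from the uniform boundedness principle, as noted after the definition). Hence $\{\|\bar\D\xi_k\|\}_k$ is bounded.

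Applying \cref{lem:op_closure} to the closable operator $\bar\D$ then yields $\xi\in\Dom\bar\D$, completing the inclusion $\Dom\D^*\subseteq\Dom\bar\D$ and thereby the essential self-adjointness of $\D$.

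The main (and in fact only) subtlety I anticipate is keeping track of domains when writing $\bar\D\phi_k\xi = \phi_k\D^*\xi + [\D^*,\phi_k]\xi$: the identity mixes the actions of $\bar\D$ and $\D^*$, and its correctness depends precisely on the two conclusions of \cref{lem:comm_D*}, namely that $\phi_k$ preserves $\Dom\D^*$ and that the commutator on $\Dom\D^*$ has the same closure as the commutator on $\Dom\D$. Once this is in hand, the argument is a clean application of the approximate-identity bound together with \cref{lem:op_closure}.
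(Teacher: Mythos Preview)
Your proof is correct and is essentially identical to the paper's own argument: both take $\xi_k=\phi_k\xi$, use \cref{lem:comm_D*} to write $\bar\D\phi_k\xi=\phi_k\D^*\xi+[\D^*,\phi_k]\xi$ and deduce this is a bounded sequence, and then invoke \cref{lem:op_closure}. Your version simply spells out the norm estimate a bit more explicitly.
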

\begin{proof}
Let $\{\phi_k\}_{k\in\N}\in\B(\mH)$ be an adequate approximate identity for $\D$. 
From \cref{lem:comm_D*} we know that $\bar{[\D^*,\phi_k]} = \bar{[\bar\D,\phi_k]}$, and in particular this shows that $[\D^*,\phi_k]$ is uniformly bounded. 
For $\xi\in\Dom\D^*$ we have $\phi_k\xi\in\Dom\bar\D$, and we find that
$$
\bar\D\phi_k\xi = \D^*\phi_k\xi = \phi_k\D^*\xi + [\D^*,\phi_k]\xi 
$$
is a bounded sequence in $\mH$. Since $\phi_k\xi\to\xi$, we conclude from \cref{lem:op_closure} that $\xi\in\Dom\bar\D$ and hence that $\bar\D$ is self-adjoint. 
\end{proof}

The proof of \cref{lem:op_closure} uses that every bounded sequence in $\mH$ has a weakly convergent subsequence, which relies on the fact that a Hilbert space is equal to its own dual. Since a Hilbert $B$-module is in general \emph{not} equal to its own dual, the proof does not generalise to Hilbert modules. Instead, we will invoke the local-global principle to prove the analogue of \cref{prop:approx_id_sa} for Hilbert modules. 

Let us briefly recall the local-global principle from \cite{Pie06} (see also \cite{KL12}). Let $E_B$ be a right Hilbert $B$-module, and let $\pi$ be a representation of $B$ on a Hilbert space $\mH^\pi$. We then get an induced representation $\pi_E$ of $\End_B(E)$ on the interior tensor product $E\hot_B\mH^\pi$. This Hilbert space $E\hot_B\mH^\pi$ is called the \emph{localisation} of $E$ with respect to the representation $\pi$. 

Now let $T$ be a semi-regular operator on $E_B$. We define the unbounded operator $T_0^\pi$ on $E\hot_B\mH^\pi$ as $T_0^\pi(e\hot h) := (Te)\hot h$ with domain $\Dom T\hat\odot_B\mH^\pi$ (where $\hat\odot$ denotes the algebraic tensor product). Then $T_0^\pi$ is densely defined and closable, and its closure $T^\pi$ is called the \emph{localisation} of $T$ with respect to $\pi$. We have the inclusion $(T^*)^\pi \subset (T^\pi)^*$. In particular, if $T$ is symmetric, then so is $T^\pi$. 

\begin{thm}[{Local-global principle \cite[Th\'eor\`eme 1.18]{Pie06}}]
\label{thm:loc-glob}
For a closed, densely defined and symmetric operator $T$ on a Hilbert module $E_B$, the following statements are equivalent:
\begin{enumerate}
\item the operator $T$ is self-adjoint and regular;
\item for every irreducible representation $(\pi,\mH^\pi)$ of $B$ the localisation $T^\pi$ is self-adjoint. 
\end{enumerate}
\end{thm}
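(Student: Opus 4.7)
The plan is to reduce both implications to the question of whether $T \pm i$ maps $\Dom T$ \emph{surjectively} onto $E$. Recall that for a closed, densely defined, symmetric operator on a Hilbert $B$-module, regularity together with self-adjointness is equivalent to $T+i$ and $T-i$ being bijective $\Dom T \to E$, with bounded adjointable inverses in $\End_B(E)$. For closed symmetric operators $S$ on a Hilbert space, the analogous criterion is that $S$ is self-adjoint if and only if $S \pm i$ has dense (equivalently, full) range. So the task is to translate surjectivity of $T \pm i$ on $E$ into surjectivity of $T^\pi \pm i$ on every localisation $E \hot_B \mH^\pi$.

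For the implication $(1) \Rightarrow (2)$, I would use that the assignment $S \mapsto S^\pi$ extends to a $*$-homomorphism $\End_B(E) \to \B(E \hot_B \mH^\pi)$, compatible with the localisation of unbounded operators. If $T$ is regular self-adjoint, then $(T \pm i)^{-1} \in \End_B(E)$, and their localisations provide two-sided bounded inverses of $T^\pi \pm i$. Since $T^\pi$ is closed and symmetric, surjectivity of $T^\pi \pm i$ forces self-adjointness.

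For the converse $(2) \Rightarrow (1)$, the aim is to show $\Ran(T \pm i) = E$. The symmetry identity $\la (T+i)\eta | (T+i)\eta \ra = \la T\eta | T\eta \ra + \la \eta | \eta \ra$ shows that $T+i$ is bounded below on $\Dom T$ in the $B$-valued sense, so $N := \Ran(T+i)$ is a closed $B$-submodule of $E$. Assuming for contradiction $N \subsetneq E$, pick $\xi \in E \setminus N$, so that $\inf_{\eta \in \Dom T} \|\xi - (T+i)\eta\| > 0$. I would then locate a pure state $\omega$ on $B$ for which this obstruction persists after localising: the GNS construction yields an irreducible representation $\pi$ with cyclic vector $h_\omega$, and one verifies that in $E \hot_B \mH^\pi$ the vector $\xi \hot h_\omega$ remains at positive distance from $\Ran(T^\pi + i)$. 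But then $T^\pi + i$ fails to be surjective, contradicting self-adjointness of $T^\pi$; applying the same argument to $T-i$ concludes the proof.

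The main obstacle is the last step: translating an obstruction expressed through the $B$-valued inner product on $E$ into a genuine Hilbert-space obstruction at a single irreducible localisation. Hilbert $B$-modules are not self-dual and closed submodules need not be orthogonally complemented, so one cannot simply take orthogonal projections onto $N$. The key input should be that positivity in $B$ is detected by pure states, combined with a careful tracking through the GNS construction to show that the $B$-valued distance of $\xi$ from $N$ survives as a strictly positive Hilbert-space distance in the localisation associated to a suitably chosen pure state. Making this passage rigorous — essentially a form of Hahn--Banach for Hilbert modules — is where I would expect to spend most of the effort.
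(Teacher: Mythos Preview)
This theorem is not proved in the paper; it is stated with attribution to \cite[Th\'eor\`eme 1.18]{Pie06} (see also \cite{KL12}) and used as a black box, so there is no in-paper argument to compare against.

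On the merits of your outline: the implication $(1)\Rightarrow(2)$ is fine as sketched. Localisation extends to a $*$-homomorphism $\End_B(E)\to\B(E\hot_B\mH^\pi)$, so $(T\pm i)^{-1}$ localise to bounded inverses of $T^\pi\pm i$, and surjectivity of $S\pm i$ for a closed symmetric Hilbert-space operator $S$ forces self-adjointness.

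For $(2)\Rightarrow(1)$ there is a genuine gap, not just missing detail. Your argument reduces to the claim: if $N=\Ran(T+i)\subsetneq E$ is a proper closed submodule and $\xi\notin N$, then for some pure state $\omega$ the vector $\xi\hot h_\omega$ lies outside $N\hot_B\mH^\pi$. But this separation property \emph{fails} for general closed submodules. Take $B=\B(\mH)$ with $\mH$ infinite-dimensional, $E=B$ as a right module over itself, and $N$ the ideal of compact operators; the only irreducible representation of $B$ is the identity on $\mH$, and the localisation $x\hot h\mapsto xh$ identifies both $E\hot_B\mH$ and $N\hot_B\mH$ with $\mH$, even though $N\subsetneq E$. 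So the minimax obstacle you anticipate --- exchanging $\sup_\omega$ and $\inf_\eta$ --- is not merely technical, and a pure-state argument applied to an arbitrary closed submodule cannot succeed. The proofs in \cite{Pie06,KL12} instead exploit structure specific to graphs of closed operators: with $V(x,y)=(-y,x)$, the orthogonal decomposition $E\oplus E=V\!\cdot\!G(T)\oplus G(T^*)$ is always complemented, and one compares $G(T)$ with $G(T^*)$ through the localisations rather than attempting to separate a vector from $\Ran(T\pm i)$ directly.
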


\begin{lem}
\label{lem:loc_sum_comp}
Let $S$ and $T$ be semi-regular operators on a Hilbert $B$-module $E$, and let $\pi\colon B\to\B(\mH^\pi)$ be a $*$-representation of $B$. 
\begin{enumerate}
\item If $S+T$ is semi-regular and $S^\pi+T^\pi$ is closable, then $(S+T)^\pi \subset \bar{S^\pi+T^\pi}$.
\item If $ST$ is semi-regular and $S^\pi T^\pi$ is closable, then $(ST)^\pi \subset \bar{S^\pi T^\pi}$. 
\end{enumerate}
\end{lem}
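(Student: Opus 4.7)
The plan is to verify both inclusions first on the algebraic tensor product $\Dom(\cdot)\hodot_B\mH^\pi$ (the initial domain used to define each localisation), and then extend by taking closures. In other words, I aim to show $(S+T)_0^\pi\subset S^\pi+T^\pi$ and $(ST)_0^\pi\subset S^\pi T^\pi$ on elementary tensors; the desired inclusions then follow immediately from closing both sides.

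For part (1), take $e\in\Dom(S+T)\subset\Dom S\cap\Dom T$ and $h\in\mH^\pi$. A direct computation on elementary tensors gives
\[
(S+T)_0^\pi(e\hot h) = \bigl((S+T)e\bigr)\hot h = (Se)\hot h + (Te)\hot h = S_0^\pi(e\hot h) + T_0^\pi(e\hot h).
\]
In particular $e\hot h$ lies in $\Dom S_0^\pi\cap\Dom T_0^\pi \subset \Dom S^\pi\cap\Dom T^\pi = \Dom(S^\pi+T^\pi)$, so $(S+T)_0^\pi\subset S^\pi+T^\pi$. Since $S^\pi+T^\pi$ is closable by hypothesis, its closure $\bar{S^\pi+T^\pi}$ is a closed extension of $(S+T)_0^\pi$, and hence contains $(S+T)^\pi = \bar{(S+T)_0^\pi}$.

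Part (2) is entirely analogous. For $e\in\Dom(ST)$---so $e\in\Dom T$ and $Te\in\Dom S$---and $h\in\mH^\pi$, I compute
\[
(ST)_0^\pi(e\hot h) = (STe)\hot h = S_0^\pi\bigl((Te)\hot h\bigr) = S_0^\pi T_0^\pi(e\hot h),
\]
using that $(Te)\hot h\in\Dom S\hodot\mH^\pi = \Dom S_0^\pi$. Hence $(ST)_0^\pi\subset S_0^\pi T_0^\pi\subset S^\pi T^\pi$, and closing both sides yields $(ST)^\pi\subset\bar{S^\pi T^\pi}$.

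I do not anticipate any real obstacle. The only point worth checking is that these elementwise computations are well-defined on the \emph{balanced} tensor product over $B$: this is legitimate because every semi-regular operator on $E_B$ is $B$-linear on its domain, so $S_0^\pi$ and $T_0^\pi$ descend to balanced elementary tensors without ambiguity. The closability hypotheses in the statement play a purely formal role, ensuring that the right-hand sides $\bar{S^\pi+T^\pi}$ and $\bar{S^\pi T^\pi}$ exist as closed operators.
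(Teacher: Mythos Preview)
Your proposal is correct and follows essentially the same approach as the paper: verify the inclusion on the initial domain $\Dom(\cdot)\hodot_B\mH^\pi$ via a direct computation on tensors, then take closures. The paper's proof is nearly identical, differing only in that it phrases part (2) for finite sums $\sum_n\psi_n\hot h_n$ rather than single elementary tensors, and omits your remark about well-definedness on the balanced tensor product.
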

\begin{proof}
\begin{enumerate}
\item By definition we have $\Dom(S+T) := \Dom S\cap\Dom T$, which yields the inclusion $\Dom(S+T)^\pi_0 \subset \Dom S^\pi_0 \cap \Dom T^\pi_0$. Taking closures then proves the statement. 
\item We have $\Dom(ST) = \{\psi\in E : T\psi\in\Dom S\}$. 
If $\xi = \sum_n\psi_n\hot h_n\in\Dom(ST)^\pi_0 := \Dom(ST)\odot\mH^\pi$, then 
$$
T^\pi\xi = \sum_n (T\psi_n)\hot h_n \in \Dom S\hodot\mH^\pi = \Dom S^\pi_0 \subset \Dom S^\pi .
$$
Hence we have $\Dom(ST)^\pi_0 \subset \Dom(S^\pi T^\pi)$. Taking closures then proves the statement. \qedhere
\end{enumerate}
\end{proof}

\begin{lem}
\label{lem:approx_id_loc}
Let $\D$ be a regular symmetric operator on a Hilbert $B$-module $E$, and let $\pi\colon B\to\B(\mH^\pi)$ be a $*$-representation of $B$. If $\{\phi_k\}_{k\in\N}\subset\End_B(E)$ is an adequate approximate identity for $\D$, then $\{\phi_k^\pi\}_{k\in\N}$ is an adequate approximate identity for $\D^\pi$. 
\end{lem}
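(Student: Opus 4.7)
My plan is to verify the three defining conditions of an adequate approximate identity for $\D^\pi$ in turn: the sequential approximate identity property, the uniform commutator bound, and the domain-preservation condition $\phi_k^\pi\cdot\Dom(\D^\pi)^*\subset\Dom\D^\pi$. The first two will be routine; the third will be the main obstacle.

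First, I would note that each $\phi_k^\pi$ is self-adjoint with $\|\phi_k^\pi\|\leq\|\phi_k\|$, so the family is uniformly bounded. Strong convergence $\phi_k^\pi\to 1$ is immediate on elementary tensors since $\phi_k^\pi(e\hot h)=(\phi_k e)\hot h\to e\hot h$, and extends to all of $E\hot_B\mH^\pi$ by density and uniform boundedness.

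Next, for the commutator bound, I would compute on the initial domain $\Dom\D\hodot\mH^\pi$:
\[
[\D_0^\pi,\phi_k^\pi](\psi\hot h)=\bar{[\bar\D,\phi_k]}(\psi)\hot h=(\bar{[\bar\D,\phi_k]})^\pi(\psi\hot h),
\]
using the hypothesis that $\phi_k\psi\in\Dom\bar\D$. The right-hand side is a bounded operator with norm at most $\|\bar{[\bar\D,\phi_k]}\|$, and since $\Dom\D\hodot\mH^\pi$ is a core for $\D^\pi$, I would extend this to all of $\Dom\D^\pi$ via a graph-Cauchy approximation: for $\xi\in\Dom\D^\pi$ choose $\xi_n\in\Dom\D_0^\pi$ with $\xi_n\to\xi$ and $\D_0^\pi\xi_n\to\D^\pi\xi$; then $\phi_k^\pi\xi_n\in\Dom\bar\D\hodot\mH^\pi\subset\Dom\D^\pi$, and $\D^\pi\phi_k^\pi\xi_n=\phi_k^\pi\D_0^\pi\xi_n+(\bar{[\bar\D,\phi_k]})^\pi\xi_n$ is convergent, so closedness of $\D^\pi$ yields $\phi_k^\pi\xi\in\Dom\D^\pi$ together with $[\D^\pi,\phi_k^\pi]\xi=(\bar{[\bar\D,\phi_k]})^\pi\xi$, giving the uniform norm bound $\sup_k\|\bar{[\D^\pi,\phi_k^\pi]}\|\leq\sup_k\|\bar{[\bar\D,\phi_k]}\|<\infty$.

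Finally, for the domain-preservation condition, the hypothesis $\phi_k\cdot\Dom\D^*\subset\Dom\bar\D$ combined with the identity $\bar\D\phi_k=\phi_k\D^*+\bar{[\bar\D,\phi_k]}$ on $\Dom\D^*$ (which follows from \cref{lem:comm_D*}) will localize on the algebraic subspace $\Dom\D^*\hodot\mH^\pi$ to the inclusion $\phi_k^\pi\cdot(\Dom\D^*\hodot\mH^\pi)\subset\Dom\bar\D\hodot\mH^\pi\subset\Dom\D^\pi$ together with the operator identity $\D^\pi\phi_k^\pi\xi=\phi_k^\pi(\D^*)_0^\pi\xi+(\bar{[\bar\D,\phi_k]})^\pi\xi$ for $\xi$ in this subspace. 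The hard part will be extending from this initial subspace to all of $\Dom(\D^\pi)^*$: the natural route is to invoke the equality $(\D^*)^\pi=(\D^\pi)^*$, which holds because $\D$ is regular and adjoints commute with localization for regular operators. Granted this, any $\xi\in\Dom(\D^\pi)^*=\Dom(\D^*)^\pi$ is the graph-norm limit of a sequence $\xi_n\in\Dom\D^*\hodot\mH^\pi$, and the localized identity above together with closedness of $\D^\pi$ then yields $\phi_k^\pi\xi\in\Dom\D^\pi$ as required.
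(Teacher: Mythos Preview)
Your proof is correct and follows essentially the same strategy as the paper's: both arguments verify the commutator bound by relating $[\D^\pi,\phi_k^\pi]$ to the localisation of the bounded operator $\bar{[\bar\D,\phi_k]}$, and both handle the crucial domain-preservation condition by invoking the equality $(\D^*)^\pi=(\D^\pi)^*$ (valid because $\D$ is regular) and then passing to graph-norm limits from the algebraic core $\Dom\D^*\hodot_B\mH^\pi$. The only cosmetic difference is that the paper packages the commutator step via \cref{lem:loc_sum_comp} and, for the domain step, applies \cref{lem:comm_D*} directly to the symmetric operator $\D^\pi$ on the Hilbert space rather than localising the module-level identity as you do; the substance is the same.
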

\begin{proof}
By \cref{lem:loc_sum_comp} we have $[\D,\phi_k]^\pi \subset \bar{[\D^\pi,\phi_k^\pi]}$. But $[\D,\phi_k]$ is bounded, so we must have $[\D,\phi_k]^\pi = \bar{[\D^\pi,\phi_k^\pi]}$, and therefore $\sup_{k\in\N}\|\bar{[\D^\pi,\phi_k^\pi]}\| < \infty$. 
Lastly, we observe that $\phi_k^\pi \cdot \Dom (\D^*)_0^\pi \subset \Dom \D^\pi$, but it remains to check that $\phi_k^\pi\cdot\Dom(\D^\pi)^* \subset \Dom \D^\pi$. 

So, let $\psi\in\Dom(\D^*)^\pi$, and take $\psi_n\in\Dom(\D^*)^\pi_0$ such that $\psi_n\to\psi$ in the graph norm of $(\D^*)^\pi$. Since $\D$ is regular, we know from the local-global principle that $(\D^*)^\pi = (\D^\pi)^*$. 
We then have the equality
$$
\D^\pi \phi_k^\pi \psi_n = (\D^\pi)^* \phi_k^\pi \psi_n = \phi_k^\pi (\D^\pi)^* \psi_n + \big[(\D^\pi)^*,\phi_k^\pi\big] \psi_n .
$$
From \cref{lem:comm_D*} we know that $\bar{\big[(\D^\pi)^*,\phi_k^\pi\big]} = \bar{\big[\D^\pi,\phi_k^\pi\big]}$, which is bounded. By assumption, $(\D^\pi)^* \psi_n$ converges to $(\D^\pi)^* \psi$. Hence $\D^\pi \phi_k^\pi \psi_n$ also converges, which shows that $\phi_k^\pi \psi$ lies in the domain of $\D^\pi$. 
\end{proof}

\begin{thm}
\label{thm:approx_id_reg-sa}
Let $\D$ be a regular symmetric operator on a Hilbert $B$-module $E$. Then $\D$ is self-adjoint if and only if there exists an adequate approximate identity for $\D$. 
\end{thm}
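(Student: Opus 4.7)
The plan is to prove the two directions separately, with the reverse implication being the substantive one.

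For the forward direction ($\Rightarrow$), suppose $\D$ is regular self-adjoint. I would verify that the sequence $\phi_k := (1+\tfrac1k\bar\D^2)^{-1/2}$, already suggested in the text, is an adequate approximate identity. Using the bounded Borel functional calculus for regular self-adjoint operators on Hilbert modules, each $\phi_k$ lies in $\End_B(E)$, is self-adjoint, commutes with $\bar\D$ on $\Dom\bar\D$ (so $[\bar\D,\phi_k]=0$ trivially satisfies the uniform bound), and preserves $\Dom\bar\D = \Dom\D^*$. Strong convergence $\phi_k\to 1$ follows from the fact that $(1+\tfrac1k x^2)^{-1/2}\to 1$ pointwise and boundedly on $\R$, combined with the spectral theorem applied to vectors in $\Dom\bar\D$ and a density argument.

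For the reverse direction ($\Leftarrow$), which is the main content, the strategy is to bootstrap \cref{prop:approx_id_sa} (the Hilbert space version) via the local-global principle. Suppose $\{\phi_k\}_{k\in\N}$ is an adequate approximate identity for the regular symmetric operator $\D$. Since $\D$ is regular (in particular closed) and symmetric, \cref{thm:loc-glob} tells us that $\D$ is self-adjoint if and only if the localisation $\D^\pi$ is self-adjoint for every irreducible representation $(\pi,\mH^\pi)$ of $B$. So fix such a $\pi$.

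By \cref{lem:approx_id_loc}, the localised sequence $\{\phi_k^\pi\}_{k\in\N}\subset\B(E\hot_B\mH^\pi)$ is an adequate approximate identity for $\D^\pi$. Since $\D^\pi$ is a symmetric operator on a Hilbert space, \cref{prop:approx_id_sa} applies and yields that $\D^\pi$ is essentially self-adjoint. But $\D^\pi$ is closed by construction (it is defined as the closure of $\D_0^\pi$), so $\D^\pi$ is itself self-adjoint. As $\pi$ was arbitrary, the local-global principle delivers self-adjointness of $\D$, completing the proof.

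I do not expect a serious obstacle: all the heavy lifting has already been carried out in \cref{lem:comm_D*,prop:approx_id_sa,thm:loc-glob,lem:approx_id_loc}. The only point that requires a sentence of care is noting that $\D^\pi$ is closed by definition, so that the conclusion of \cref{prop:approx_id_sa} (essential self-adjointness) upgrades directly to self-adjointness of $\D^\pi$ without needing to pass to a further closure.
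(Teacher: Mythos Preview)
Your proposal is correct and follows exactly the paper's approach: the forward direction via $\phi_k=(1+\tfrac1k\D^2)^{-1/2}$, and the reverse direction by combining \cref{lem:approx_id_loc} with \cref{prop:approx_id_sa} and the local-global principle. Your extra remark that $\D^\pi$ is closed by construction, so that essential self-adjointness upgrades to self-adjointness, is a point of care that the paper leaves implicit.
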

\begin{proof}
If $\D$ is self-adjoint, then $\phi_k := (1+\frac1k\D^2)^{-\frac12}$ (for $0<k\in\N$) gives an adequate approximate identity for $\D$. Conversely, suppose there exists an adequate approximate identity for $\D$. For any representation $(\pi,\mH^\pi)$ of $B$, we obtain by \cref{lem:approx_id_loc} an adequate approximate identity for the localisation $\D^\pi$. By \cref{prop:approx_id_sa}, this implies that the operator $\D^\pi$ is self-adjoint. The local-global principle (\cref{thm:loc-glob}) then shows that $\D$ is self-adjoint. 
\end{proof}

\begin{remark}
We emphasise that the existence of an adequate approximate identity cannot be used to show that a symmetric operator must be regular, because \cref{lem:approx_id_loc} relies on the \emph{assumption} of regularity. In practice, if one does not (yet) know if a symmetric operator is regular, it can be more fruitful to try to apply \cref{prop:approx_id_sa} to the localisations of the symmetric operator, and then employ the local-global principle. Indeed, this is the approach we will use in the following section. 
\end{remark}

\section{Locally bounded perturbations}
\label{sec:loc_bdd}

\begin{defn}
Let $M$ be a densely defined operator on $E$ and let $\{\phi_k\}_{k\in\N}\subset\End_B(E)$ be a sequential approximate identity. We say that $M$ is \emph{locally bounded (with respect to $\{\phi_k\}$)} if $\phi_k\cdot\Dom M\to\Dom M$ and $M\phi_k$ is bounded on $\Dom M$ (for all $k\in\N$). 
\end{defn}

\begin{lem}
\label{lem:Dom_M}
Let $M$ be a semi-regular operator on $E$, and let $\phi=\phi^*\in\End_B(E)$ be such that $\phi\cdot\Dom M\subset\Dom M$ and $M\phi$ is bounded on $\Dom M$. Then: 
\begin{enumerate}
\item $\phi\cdot E\subset\Dom\bar M$ and $\bar{M\phi} = \bar M\phi$;
\item $\bar M\phi$ is adjointable, and its adjoint equals the closure of $\phi M^*$;
\item if $M$ is symmetric, then the commutator $[M,\phi]$ is bounded and its closure equals $\bar{M\phi} - (M\phi)^*$. 
\end{enumerate}
\end{lem}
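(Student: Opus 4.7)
The plan is to handle the three parts in order, repeatedly exploiting that $M\phi$ is bounded and densely defined on $\Dom M$, hence extends uniquely to a bounded operator on all of $E$; I will abuse notation and continue to write $\bar{M\phi}$ for this bounded extension (which of course coincides with the closure of $M\phi$).

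For part (1), I would take an arbitrary $\psi\in E$ and approximate it by a sequence $\psi_n\in\Dom M$. The hypothesis $\phi\cdot\Dom M\subset\Dom M$ yields $\phi\psi_n\in\Dom M$, while $\phi\psi_n\to\phi\psi$ and $M\phi\psi_n=\bar{M\phi}\psi_n\to\bar{M\phi}\psi$. Closedness of $\bar M$ then forces $\phi\psi\in\Dom\bar M$ with $\bar M\phi\psi=\bar{M\phi}\psi$.

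For part (2), the natural candidate for the adjoint is the closure of $\phi M^*$, which is densely defined because $M$ is semi-regular. The key computation, valid for $\psi\in\Dom M$ and $\xi\in\Dom M^*$, is
\[
\la M\phi\psi|\xi\ra=\la\phi\psi|M^*\xi\ra=\la\psi|\phi M^*\xi\ra,
\]
using self-adjointness of $\phi$. This both exhibits a bounded formal adjoint for $\bar{M\phi}$ (so $\bar{M\phi}$ is adjointable) and establishes the inclusion $\phi M^*\subset(\bar{M\phi})^*$. Since $\phi M^*$ is densely defined and dominated by a bounded operator, it is closable with bounded closure, and this closure must then agree with $(\bar{M\phi})^*$.

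For part (3), I would use symmetry of $M$ to upgrade the observation that $\phi M$ is, in fact, also bounded on $\Dom M$, with closure equal to $(M\phi)^*$. Indeed, for $\psi,\xi\in\Dom M$, the containment $\phi\xi\in\Dom M$ together with symmetry gives
\[
\la\phi M\psi|\xi\ra=\la M\psi|\phi\xi\ra=\la\psi|M\phi\xi\ra=\la(\bar{M\phi})^*\psi|\xi\ra.
\]
Density of $\Dom M$ then yields $\phi M\psi=(\bar{M\phi})^*\psi$ for every $\psi\in\Dom M$, so $\phi M$ is the restriction of a bounded operator. Consequently $[M,\phi]=M\phi-\phi M$ is bounded on $\Dom M$, and its closure equals $\bar{M\phi}-(M\phi)^*$, as claimed. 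I do not anticipate a serious obstacle; the main point of care is the bookkeeping of domains in part (2), and the symmetry of $M$ enters only in part (3), precisely to move $M$ across the inner product.
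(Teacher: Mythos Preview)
Your proposal is correct and follows essentially the same approach as the paper. The only cosmetic difference is in part~(3): the paper writes $\phi M\psi = \phi M^*\psi = (M\phi)^*\psi$ directly (using $M\subset M^*$ and part~(2)), whereas you verify $\phi M\psi = (\bar{M\phi})^*\psi$ via an inner-product computation against $\xi\in\Dom M$; both routes amount to the same identification and invoke part~(2).
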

\begin{proof}
\begin{enumerate}
\item Let $\eta\in E$. 
Since $\Dom M$ is dense in $E$, there exist $\eta_n\in\Dom M$ such that $\eta_n\to\eta$. Then we have $\phi\eta_n\to \phi\eta$ and 
$$
M(\phi\eta_n) = (M\phi)\eta_n = \bar{M\phi}\eta_n \to \bar{M\phi}\eta ,
$$
where we used that $M\phi$ is bounded on $\Dom M$ and hence its closure $\bar{M\phi}$ is bounded on $E$. This shows that $\phi\eta$ lies in the domain of the closure of $M$, and we have $\bar M\phi\eta = \bar{M\phi}\eta$. 
\item For $\psi\in E$ and $\xi\in\Dom M^*$ we see that $\la\xi|\bar M\phi\psi\ra = \la\phi M^*\xi|\psi\ra$, which shows that $\bar M\phi$ has a densely defined adjoint $\phi M^*$. Since 
$$
\|\phi M^*\xi\| = \sup_{\|\psi\|=1} \{\|\la\phi M^*\xi|\psi\ra\|\} = \sup_{\|\psi\|=1} \{\|\la\xi|\bar M\phi\psi\ra\|\} \leq \|\xi\| \|\bar M\phi\| ,
$$
we also see that $\phi M^*$ is bounded, and hence that $\phi M^*$ extends to a bounded operator which is the adjoint of $\bar M\phi$ (in particular, $\bar M\phi$ is adjointable). 
\item The commutator $[M,\phi]$ is densely defined on $\Dom M$, and for $\psi\in\Dom M$ we have 
$$
[M,\phi]\psi = M\phi\psi - \phi M\psi = M\phi\psi - \phi M^*\psi = (M\phi - (M\phi)^*) \psi ,
$$
where we have used the symmetry of $M$. Since $M\phi$ is bounded, so is $[M,\phi]$. \qedhere
\end{enumerate}
\end{proof}

We will be considering perturbations of a self-adjoint operator $\D$ by a locally bounded operator $M$. We start with a lemma which allows us to control the domain of the adjoint of the perturbed operator. 

\begin{lem}
\label{lem:loc_bdd_pert_dom}
Let $\D$ and $M$ be symmetric operators on $E$ such that $\Dom\D\cap\Dom M$ is dense. Let $\phi\in\End_B(E)$ be a self-adjoint operator such that
\begin{enumerate}
\item $\phi\cdot\Dom\D\subset\Dom\bar\D$ and $[\bar\D,\phi]$ is bounded on $\Dom\D$;
\item $\phi\cdot\Dom M\subset\Dom M$, and $M\phi$ is bounded on $\Dom M$.
\end{enumerate}
Then $\phi\cdot\Dom(\bar\D+\bar M)^* \subset \Dom\D^*\cap\Dom\bar M$. 
\end{lem}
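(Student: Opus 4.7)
The plan is to take $\xi\in\Dom(\bar\D+\bar M)^*$ and verify each of the two containments separately. The inclusion $\phi\xi\in\Dom\bar M$ is essentially immediate from \cref{lem:Dom_M}(i): since $\phi\cdot\Dom M\subset\Dom M$ and $M\phi$ is bounded on $\Dom M$, that lemma gives $\phi\cdot E\subset\Dom\bar M$, so in particular $\phi\xi\in\Dom\bar M$.

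The real work is in showing $\phi\xi\in\Dom\D^*$. The strategy is standard: given $\psi\in\Dom\D$, rewrite $\la\phi\xi|\D\psi\ra = \la\xi|\phi\D\psi\ra$ and move $\phi$ past $\D$. Assumption (1) gives $\phi\psi\in\Dom\bar\D$ and the identity $\phi\D\psi = \bar\D\phi\psi - [\bar\D,\phi]\psi$ on $\Dom\D$, which yields
\[
\la\phi\xi|\D\psi\ra = \la\xi|\bar\D\phi\psi\ra - \la\xi|[\bar\D,\phi]\psi\ra .
\]
The key observation is that $\phi\psi$ actually lies in $\Dom(\bar\D+\bar M)=\Dom\bar\D\cap\Dom\bar M$: the first factor is assumption (1), and the second comes from $\phi\cdot E\subset\Dom\bar M$ (as above). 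This allows us to invoke $\xi\in\Dom(\bar\D+\bar M)^*$ to rewrite
\[
\la\xi|\bar\D\phi\psi\ra = \la(\bar\D+\bar M)^*\xi|\phi\psi\ra - \la\xi|\bar M\phi\psi\ra .
\]

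To conclude, I need to push every remaining occurrence of $\phi$ across the pairing so that the final expression has the form $\la\eta|\psi\ra$ for some $\eta\in E$ independent of $\psi$. This is where the adjointability statements kick in: $\phi$ itself is self-adjoint; $\bar M\phi$ is adjointable by \cref{lem:Dom_M}(ii); and $\bar{[\bar\D,\phi]}$ is bounded on $E$ and adjointable by \cref{lem:comm_D*}. Assembling these,
\[
\la\phi\xi|\D\psi\ra = \bigl\la \phi(\bar\D+\bar M)^*\xi - (\bar M\phi)^*\xi - \bar{[\bar\D,\phi]}^*\xi \,\big|\, \psi\bigr\ra
\]
for all $\psi\in\Dom\D$, which is exactly the defining property of $\phi\xi\in\Dom\D^*$.

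I do not expect a serious obstacle; the main care needed is bookkeeping, namely tracking that $\phi\psi$ sits in the intersection $\Dom\bar\D\cap\Dom\bar M$ so that the adjoint hypothesis on $\xi$ can be applied, and recognising that each intermediate operator acting on $\xi$ (namely $\phi$, $\bar M\phi$, and $\bar{[\bar\D,\phi]}$) is bounded and adjointable thanks to the two preceding lemmas. Note that the density hypothesis $\Dom\D\cap\Dom M$ dense is not used directly in this particular step, but is implicit in giving $\bar\D+\bar M$ and hence $(\bar\D+\bar M)^*$ a reasonable meaning.
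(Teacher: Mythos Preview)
Your proof is correct and follows essentially the same approach as the paper: both arguments invoke \cref{lem:Dom_M} to get $\phi\xi\in\Dom\bar M$, then for $\psi\in\Dom\D$ rewrite $\phi\D\psi = (\bar\D+\bar M)\phi\psi - \bar M\phi\psi - [\bar\D,\phi]\psi$ and push each term across the pairing using the adjointability supplied by \cref{lem:Dom_M} and \cref{lem:comm_D*}. Your write-up is slightly more explicit about why $\phi\psi\in\Dom(\bar\D+\bar M)$ and which adjointability facts are being used, but the logic is identical.
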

\begin{proof}
Let $\xi\in\Dom(\bar\D+\bar M)^*$. 
We know from \cref{lem:Dom_M} that $\phi\xi\in\Dom\bar M$ and $\bar{M\phi} = \bar M\phi$. 
For $\psi\in\Dom\D$, we calculate
\begin{align*}
\la \phi\xi|\D\psi\ra &= \la\xi|\phi\D\psi\ra = \la\xi|(\bar\D+\bar M)\phi\psi\ra - \la\xi|\bar{M\phi}\psi\ra - \la\xi|[\bar\D,\phi]\psi\ra \\
&= \la \phi(\bar\D+\bar M)^*\xi|\psi\ra - \la(M\phi)^*\xi|\psi\ra - \la[\bar\D,\phi]^*\xi|\psi\ra .
\end{align*}
Since these equalities hold for all $\psi\in\Dom\D$, we conclude that $\phi\xi$ lies in the domain of $\D^*$. 
\end{proof}

Similarly to \cref{lem:approx_id_loc}, we prove next that an adequate approximate identity for $M$ also yields an adequate approximate identity for the localisation $M^\pi$. In this case however, thanks to the local boundedness of $M$, we do not need to assume that $M$ is regular. 
\begin{lem}
\label{lem:approx_id_loc_M}
Let $M$ be a symmetric operator on a Hilbert $B$-module $E$, and let $\pi\colon B\to\B(\mH^\pi)$ be a $*$-representation of $B$. 
If $\{\phi_k\}_{k\in\N}\subset\End_B(E)$ is an adequate approximate identity for $M$ and $M$ is locally bounded w.r.t.\ $\{\phi_k\}$, then $\{\phi_k^\pi\}_{k\in\N}$ is an adequate approximate identity for $M^\pi$ and $M^\pi$ is locally bounded w.r.t.\ $\{\phi_k^\pi\}$. 
\end{lem}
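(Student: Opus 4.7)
My plan is to mirror the structure of \cref{lem:approx_id_loc}, but to replace its use of regularity with an application of \cref{lem:Dom_M} driven by local boundedness. First I verify the easy parts: since $\phi_k$ is self-adjoint in $\End_B(E)$, the localisation $\phi_k^\pi$ is self-adjoint on $E\hot_B\mH^\pi$, with $\sup_k\|\phi_k^\pi\|\le\sup_k\|\phi_k\|<\infty$. Strong convergence $\phi_k^\pi(e\hot h)=(\phi_k e)\hot h\to e\hot h$ holds on the dense algebraic tensor product, and extends to all of $E\hot_B\mH^\pi$ by uniform boundedness, so $\{\phi_k^\pi\}$ is a sequential approximate identity.

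The central step is to promote local boundedness from $M$ to $M^\pi$. By \cref{lem:Dom_M}, applied to $M$ and $\phi_k$, the operator $\bar{M\phi_k}$ extends to a bounded, adjointable endomorphism of $E$, hence its localisation $\bar{M\phi_k}^\pi$ is a bounded operator on $E\hot_B\mH^\pi$. On the algebraic tensor product $\Dom M\hodot\mH^\pi$ this bounded operator agrees with $M^\pi\phi_k^\pi$. For a general $\xi\in\Dom M^\pi$, I approximate by $\xi_n\in\Dom M\hodot\mH^\pi$ in the graph norm of $M^\pi$; then $\phi_k^\pi\xi_n\to\phi_k^\pi\xi$ and $M^\pi\phi_k^\pi\xi_n=\bar{M\phi_k}^\pi\xi_n\to\bar{M\phi_k}^\pi\xi$ both hold in norm, so closedness of $M^\pi$ forces $\phi_k^\pi\xi\in\Dom M^\pi$ with $M^\pi\phi_k^\pi\xi=\bar{M\phi_k}^\pi\xi$. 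This yields the local boundedness of $M^\pi$ with norm bound $\|M^\pi\phi_k^\pi\|\le\|\bar{M\phi_k}\|$.

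With local boundedness of $M^\pi$ established, I apply \cref{lem:Dom_M}(1) once more, this time to $M^\pi$ and $\phi_k^\pi$. Since $M^\pi$ is closed by construction, the conclusion reads $\phi_k^\pi\cdot(E\hot_B\mH^\pi)\subset\Dom M^\pi$, which in particular contains $\phi_k^\pi\cdot\Dom(M^\pi)^*$. For the commutator bound, I rerun the same graph-norm argument of the previous paragraph with $\bar{[M,\phi_k]}$ in place of $\bar{M\phi_k}$: on $\Dom M\hodot\mH^\pi$ we have $[M^\pi,\phi_k^\pi]=\bar{[M,\phi_k]}^\pi$, and the approximation argument extends this identity to all of $\Dom M^\pi$. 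Consequently $\sup_k\|\bar{[M^\pi,\phi_k^\pi]}\|\le\sup_k\|\bar{[M,\phi_k]}\|<\infty$, giving the last adequate-identity condition.

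The main obstacle is precisely the one that prevented \cref{lem:approx_id_loc} from going through without regularity: in general there is no relation between $(M^\pi)^*$ and $(M^*)^\pi$, so the containment $\phi_k\cdot\Dom M^*\subset\Dom\bar M$ does not transfer to the localisation by a direct adjoint computation. The trick is that local boundedness lets me sidestep the adjoint entirely — after it has been promoted to $M^\pi$ in the key step, \cref{lem:Dom_M}(1) forces $\phi_k^\pi$ to send all of $E\hot_B\mH^\pi$ into $\Dom M^\pi$, so the containment for $\Dom(M^\pi)^*$ drops out for free.
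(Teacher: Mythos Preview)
Your proof is correct and follows essentially the same approach as the paper: both establish local boundedness of $M^\pi$ by the graph-norm approximation argument, then invoke \cref{lem:Dom_M} to obtain $\phi_k^\pi\cdot\Dom(M^\pi)^*\subset\Dom M^\pi$. The only minor difference is that for the commutator bound the paper cites \cref{lem:loc_sum_comp} to get $[M,\phi_k]^\pi\subset\bar{[M^\pi,\phi_k^\pi]}$, whereas you rerun the direct approximation argument; these are equivalent in content.
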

\begin{proof}
First, we note that $\phi_k^\pi\cdot\Dom M^\pi_0\subset\Dom M^\pi_0$. Next, for $\psi\in\Dom M^\pi$ there exist $\psi_n\in\Dom M^\pi_0$ such that $\psi_n\to\psi$ in the graph norm of $M^\pi$. Then $\phi_k^\pi\psi_n\to\phi_k^\pi\psi$ in norm (because $\phi_k^\pi$ is bounded) and
$$
M^\pi\phi_k^\pi \psi_n = (M\phi_k)^\pi \psi_n \to (M\phi_k)^\pi \psi ,
$$
because $(M\phi_k)^\pi$ is bounded. Hence $\phi_k^\pi\psi\in\Dom M^\pi$, and we have the equality $M^\pi\phi_k^\pi\psi = (M\phi_k)^\pi\psi$, which shows that $M^\pi$ is locally bounded w.r.t.\ $\{\phi_k^\pi\}$. By \cref{lem:Dom_M} this implies that $\phi_k^\pi\cdot\Dom(M^\pi)^* \subset \Dom M^\pi$. Lastly, from \cref{lem:loc_sum_comp} we know that $[M,\phi_k]^\pi \subset \bar{[M^\pi,\phi_k^\pi]}$. Since $[M,\phi_k]^\pi$ is bounded, we have $[M,\phi_k]^\pi = \bar{[M^\pi,\phi_k^\pi]}$ and hence that $\sup_{k\in\N}\|\bar{[M^\pi,\phi_k^\pi]}\| \leq \sup_{k\in\N}\|[M,\phi_k]\| < \infty$. Thus $\{\phi_k^\pi\}_{k\in\N}$ is an adequate approximate identity for $M^\pi$. 
\end{proof}

We are now ready to prove that, if we have a suitable approximate identity $\{\phi_k\}_{k\in\N}$, then the perturbation of a regular self-adjoint operator $\D$ by a locally bounded symmetric operator $M$ is again regular self-adjoint. Apart from local boundedness, the only additional assumption is that the commutators $[M,\phi_k]$ are uniformly bounded. 

\begin{thm}
\label{thm:reg_sa_sum_loc_bdd}
Let $E$ be a Hilbert $B$-module. 
Let $\D$ be a regular self-adjoint operator on $E$ and let $M$ be a symmetric operator on $E$ such that $\Dom\D\cap\Dom M$ is dense. 
Let $\{\phi_k\}_{k\in\N}\subset\End_B(E)$ be an adequate approximate identity for $\D$, such that $M$ is locally bounded (w.r.t.\ $\{\phi_k\}$) and $\sup_{k\in\N}\|[M,\phi_k]\| < \infty$. 
Then $\bar{\D+\bar M}$ is regular and self-adjoint. 

Furthermore, if $\phi_k\cdot\Dom\D\subset\Dom M$ for each $k\in\N$, then in fact $\bar{\D+M}$ is regular and self-adjoint (and therefore equal to $\bar{\D+\bar M}$). 
\end{thm}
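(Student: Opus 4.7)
I will apply the local-global principle (\cref{thm:loc-glob}) to $\bar{\mathcal T}$, where $\mathcal T := \D + \bar M$ is the symmetric operator on the dense domain $\Dom\D\cap\Dom\bar M$: it suffices to show, for each irreducible representation $\pi\colon B\to\B(\mH^\pi)$, that the localisation $(\bar{\mathcal T})^\pi$ is self-adjoint on $\mH^\pi$. Two preliminary identifications simplify matters: a short closure argument (using that $\Dom\mathcal T$ is a core for $\bar{\mathcal T}$) gives $(\bar{\mathcal T})^\pi = \mathcal T^\pi$, and since $\Dom M$ is a core for $\bar M$, an analogous argument gives $(\bar M)^\pi = M^\pi$.

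On $\mH^\pi$ consider the auxiliary symmetric operator $T_\pi := \D^\pi + M^\pi$ with dense domain $\Dom\D^\pi\cap\Dom M^\pi$. Here $\D^\pi$ is self-adjoint by \cref{thm:loc-glob} applied to $\D$, and $\{\phi_k^\pi\}$ is an adequate approximate identity for $\D^\pi$ (\cref{lem:approx_id_loc}) and for $M^\pi$ (\cref{lem:approx_id_loc_M}, which moreover says $M^\pi$ is locally bounded w.r.t.\ $\{\phi_k^\pi\}$). Combining \cref{lem:loc_bdd_pert_dom} applied on $\mH^\pi$ with the uniform commutator bounds that descend from $E$ to $\mH^\pi$, one verifies $\{\phi_k^\pi\}$ is adequate for $T_\pi$, and \cref{prop:approx_id_sa} then yields that $\bar T_\pi$ is self-adjoint on $\mH^\pi$.

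The main step is to identify $\mathcal T^\pi$ with $\bar T_\pi$. The inclusion $\mathcal T^\pi\subset\bar T_\pi$ follows from \cref{lem:loc_sum_comp} together with $(\bar M)^\pi=M^\pi$. For the converse, since $\mathcal T^\pi$ is closed it suffices to prove $T_\pi\subset\mathcal T^\pi$ as operators. Given $\zeta\in\Dom T_\pi$, approximate $\zeta$ by $\zeta_n\in\Dom\D\hodot\mH^\pi$ in the graph of $\D^\pi$. Because $\phi_k\cdot\Dom\D\subset\Dom\D\cap\Dom\bar M$ (by self-adjointness of $\D$ and \cref{lem:Dom_M}), each $\phi_k^\pi\zeta_n$ already lies in the core $\Dom\mathcal T^\pi_0 = (\Dom\D\cap\Dom\bar M)\hodot\mH^\pi$, and an explicit computation using the bounded extension $\bar M\phi_k = \bar{M\phi_k}$ from \cref{lem:Dom_M} lets one pass to the limit $n\to\infty$; by closedness of $\mathcal T^\pi$, this gives $\phi_k^\pi\zeta\in\Dom\mathcal T^\pi$ with
\[
\mathcal T^\pi(\phi_k^\pi\zeta) = \phi_k^\pi T_\pi\zeta + [\D,\phi_k]^\pi\zeta + [M^\pi,\phi_k^\pi]\zeta .
\]
This expression is uniformly bounded in $k$ while $\phi_k^\pi\zeta\to\zeta$ in norm, so the Hilbert-space closure criterion \cref{lem:op_closure} applied in $\mH^\pi$ forces $\zeta\in\Dom\mathcal T^\pi$; the value $\mathcal T^\pi\zeta = T_\pi\zeta$ is then determined by the inclusion $\mathcal T^\pi\subset\bar T_\pi$ already established. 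Hence $T_\pi\subset\mathcal T^\pi$, and closedness of $\mathcal T^\pi$ gives $\bar T_\pi\subset\mathcal T^\pi$, so $\mathcal T^\pi = \bar T_\pi$ is self-adjoint; \cref{thm:loc-glob} then yields that $\bar{\mathcal T}$ is regular and self-adjoint.

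For the second statement, the stronger hypothesis $\phi_k\cdot\Dom\D\subset\Dom M$ makes the entire argument above run verbatim with $M$ replacing $\bar M$ throughout (noting the bounded extension $(M\phi_k)^\pi = (\bar{M\phi_k})^\pi$ is unchanged), producing $\bar{\D+M}$ regular and self-adjoint; since $\bar{\D+M}$ is then a closed symmetric restriction of the self-adjoint operator $\bar{\D+\bar M}$, maximality of self-adjoint operators forces equality. The main obstacle throughout is the reverse inclusion $\bar T_\pi\subset\mathcal T^\pi$: one cannot hope for genuine graph-norm convergence of $\phi_k^\pi\zeta\to\zeta$ because $[\D,\phi_k]$ and $[M,\phi_k]$ are only uniformly bounded rather than vanishing, so the Hilbert-space closure criterion \cref{lem:op_closure} is essential—this is precisely why the reduction via the local-global principle to the Hilbert space $\mH^\pi$ is needed.
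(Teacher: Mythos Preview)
Your proposal is correct and follows essentially the same approach as the paper: reduce via the local-global principle to showing that each localisation is self-adjoint, verify that $\{\phi_k^\pi\}$ is adequate for $\D^\pi+M^\pi$ so that $\bar{\D^\pi+M^\pi}$ is self-adjoint by \cref{prop:approx_id_sa}, and then identify $(\bar{\D+\bar M})^\pi$ with $\bar{\D^\pi+M^\pi}$ via the two-step approximation (first $\zeta_n\to\zeta$ in the graph of $\D^\pi$ using $\phi_k\cdot\Dom\D\subset\Dom\D\cap\Dom\bar M$, then $\phi_k^\pi\zeta\to\zeta$ with uniformly bounded images, invoking \cref{lem:op_closure}). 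The only cosmetic difference is that the paper first proves the second statement (assuming $\phi_k\cdot\Dom\D\subset\Dom M$) and then deduces the first by replacing $M$ with $\bar M$, whereas you do the first statement directly and deduce the second; your explicit identifications $(\bar{\mathcal T})^\pi=\mathcal T^\pi$ and $(\bar M)^\pi=M^\pi$ are tidy but not substantive departures.
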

\begin{proof}
Consider a representation $(\pi,\mH^\pi)$ of $B$. We will first show that $\{\phi_k^\pi\}_{k\in\N}$ is an adequate approximate identity for the symmetric operator $\D^\pi+M^\pi$, so that $\D^\pi+M^\pi$ is essentially self-adjoint. 

By \cref{lem:approx_id_loc,lem:approx_id_loc_M}, $\{\phi_k^\pi\}_{k\in\N}$ is an adequate approximate identity for both $\D^\pi$ and $M^\pi$. It then follows immediately that $[\D^\pi+M^\pi,\phi_k]$ is uniformly bounded on $\Dom(\D^\pi+M^\pi)$. We know from \cref{lem:comm_D*} that then $[\bar{\D^\pi+M^\pi},\phi_k]$ is also bounded on $\Dom\bar{\D^\pi+M^\pi}$, and that $\bar{[\bar{\D^\pi+M^\pi},\phi_k]} = \bar{[\D^\pi+M^\pi,\phi_k]}$. 
Since $(\D^\pi)^* = \D^\pi$, it follows from \cref{lem:loc_bdd_pert_dom} that $\phi_k^\pi\cdot\Dom(\D^\pi+M^\pi)^* \subset \Dom\D^\pi\cap\Dom M^\pi \subset \Dom\bar{\D^\pi+M^\pi}$. Hence $\D^\pi+M^\pi$ is essentially self-adjoint by \cref{prop:approx_id_sa}. 

Suppose that we have the inclusion $\phi_k\cdot\Dom\D\subset\Dom M$. 
In order to conclude from the local-global principle that $\bar{\D+M}$ is regular and self-adjoint, we need to know that the localisations $(\bar{\D+M})^\pi$ are self-adjoint. Therefore it remains to show that $(\bar{\D+M})^\pi = \bar{\D^\pi+M^\pi}$.
By \cref{lem:loc_sum_comp}, it is sufficient to show that $\D^\pi + M^\pi \subset (\D+M)^\pi$.  

Let $\eta \in \Dom(\D^\pi+M^\pi) = \Dom\D^\pi \cap \Dom M^\pi$. 
Since 
$\Dom\D\hodot_B\mH^\pi$ is a core for $\D^\pi$, 
there exists a sequence $\eta_n\in\Dom\D\hodot_B\mH^\pi$ such that $\eta_n\to\eta$ and $(\D\hot1)\eta_n\to\D^\pi\eta$. 
First, we will check that $\phi_k^\pi\eta \in \Dom(\D+M)^\pi$. 
Because $\phi_k\cdot\Dom\D \subset \Dom\D\cap\Dom M$, we have 
\begin{align*}
(\D+M)^\pi \phi_k^\pi\eta_n &= \big((\D+M)\phi_k\hot1\big)\eta_n
= \big((\phi_k\D+[\D,\phi_k]+M\phi_k)\hot1\big) \eta_n \\
&= \phi_k^\pi (\D\hot1) \eta_n + [\D,\phi_k]^\pi \eta_n + (M\phi_k)^\pi \eta_n \\
&\xrightarrow{n\to\infty} \phi_k^\pi \D^\pi\eta + [\D,\phi_k]^\pi \eta + (M\phi_k)^\pi \eta 
= (\D^\pi + M^\pi) \phi_k^\pi \eta ,
\end{align*}
where on the last line we used that $[\D,\phi_k]^\pi = [\D^\pi,\phi_k^\pi]$ (from the proof of \cref{lem:approx_id_loc}) and that $(M\phi_k)^\pi = M^\pi\phi_k^\pi$. 
Hence $\phi_k^\pi \eta$ is an element in $\Dom(\D+M)^\pi$. 

Second, we observe that we have the convergences $\phi_k^\pi \eta\to\eta$ and 
$$
(\D+M)^\pi \phi_k^\pi \eta = (\D^\pi + M^\pi) \phi_k^\pi \eta = \phi_k^\pi (\D^\pi + M^\pi) \eta + [\D^\pi + M^\pi, \phi_k^\pi] \eta .
$$
Since $\{\phi_k^\pi\}_{k\in\N}$ is an adequate approximate identity for both $\D^\pi$ and $M^\pi$, $(\D+M)^\pi \phi_k^\pi \eta$ is a bounded sequence, and therefore $\eta \in \Dom(\D+M)^\pi$ by \cref{lem:op_closure}. Thus we have shown that $(\bar{\D+M})^\pi = \bar{\D^\pi+M^\pi}$ is self-adjoint, and the local-global principle (\cref{thm:loc-glob}) then tells us that $\bar{\D+M}$ is regular and self-adjoint. 

If we do not have the inclusion $\phi_k\cdot\Dom\D\subset\Dom M$, then we nevertheless have $\phi_k\cdot\Dom\D\subset\Dom\bar M$ by \cref{lem:Dom_M}. Hence the proof given above applies to $\bar M$ instead of $M$, and we conclude that $\bar{\D+\bar M}$ is regular and self-adjoint. 
\end{proof}


\begin{example}
\label{eg:potential}
\begin{enumerate}
\item 
Let $\bV$ be a hermitian vector bundle over a complete Riemannian manifold $X$. Let $\D$ be a symmetric first-order differential operator with initial domain $\Gamma_c^\infty(\bV)$, and suppose that $\D$ has bounded propagation speed. 
Since the manifold is complete, there exist functions $\phi_k\in C_0(X,\R)$ (for $k\in\N$), converging pointwise to $1$, such that $\sup_{x\in X}\|d\phi_k(x)\| \to 0$.%
\footnote{For instance, given a smooth proper function $\rho\colon X\to\R$ with uniformly bounded gradient (e.g.\ a smooth approximation of the distance function $x\mapsto d(x,x_0)$ for some $x_0\in X$), choose a cutoff function $\chi\in C_0^\infty(\R)$ such that $0\leq\chi\leq1$, $\chi=1$ near $0$, and $|\chi'|\leq1$, and then define $\phi_k(x) := \chi(\frac1k\rho(x))$. }
Since $\D$ has bounded propagation speed, the sequence $\{\phi_k\}_{k\in\N}$ forms an adequate approximate identity for $\D$. Hence $\D$ is essentially self-adjoint. 

Any (continuous) symmetric endomorphism $T\in\Gamma(\End\bV)$ is locally bounded and we have $[T,\phi_k]=0$. Hence $\bar{\D+T}$ is also self-adjoint. 
In fact, it is not even necessary for $T$ to be continuous, as long as it is locally bounded; the same result therefore holds for a symmetric endomorphism $T\in L^\infty_{\textnormal{loc}}(\End\bV)$, i.e.\ if $\esssup_{x\in K}\|T(x)\| < \infty$ for any compact subset $K\subset X$. 

\item 
In the above example, it is not necessary that the vector bundle $\bV$ has finite rank. Consider for instance the following setup. 
Let $E$ be a countably generated Hilbert module over a $\sigma$-unital $C^*$-algebra $B$, and let $T\in C(X,\End_B(E))$ be a symmetric operator on the Hilbert $C_0(X,B)$-module $C_0(X,E)$, which is densely defined on the domain $C_c(X,E)$. Then $T$ is locally bounded, and commutes with any approximate identity $\phi_k\in C_0(X,\R)$. 

Given $\D$ on $\bV\to X$ as above, we consider the Hilbert $B$-module $L^2(X,E\otimes\bV) := C_0(X,E) \otimes_{C_0(X)} L^2(X,\bV)$. The operator $\D$ on $L^2(X,\bV)$ extends to a regular self-adjoint operator $1\otimes_d\D$ on $L^2(X,E\otimes\bV)$ given by 
\[
(1\otimes_d\D)(\xi\otimes\psi) := \xi\otimes\D\psi + (1\otimes\sigma)(d\xi)\psi ,
\]
for any $\xi\in C_c^\infty(X,E)$ and $\psi\in\Gamma_c^\infty(X,\bV)$, where $\sigma$ denotes the principal symbol of $\D$ (for more details, see \cite{KL13}, where this operator is called $1\otimes_{\nabla^{\textnormal{Gr}}}\D$). By \cref{thm:reg_sa_sum_loc_bdd}, given any adequate approximate identity $\phi_k\in C_c(X,\R)$ for $\D$, the closure of the operator $1\otimes_d\D + T\otimes1$ is also regular self-adjoint. 
\end{enumerate}
\end{example}

\section{Stability of unbounded Kasparov modules}
\label{sec:Kasp_mod_stab}

In the previous section, we proved that perturbations of regular self-adjoint operators by `locally bounded' operators are again regular self-adjoint. In this section we apply this result to noncommutative geometry \cite{Connes94} and unbounded $\KK$-theory \cite{Kas80b,BJ83}. 
More precisely, we will show that the class of an unbounded Kasparov module is stable under locally bounded perturbations. Throughout the remainder of this article, we will assume that $A$ and $B$ are $\Z_2$-graded $C^*$-algebras such that $A$ is separable and $B$ is $\sigma$-unital. 

\begin{defn}[\cite{BJ83}]
\label{defn:Kasp_mod}
An (even) \emph{unbounded Kasparov $A$-$B$-module} $(\A,{}_{\pi}E_B,\D)$ is given by a $\Z_2$-graded, countably generated, right Hilbert $B$-module $E$, a $\Z_2$-graded $*$-homomorphism $\pi\colon A\to\End_B(E)$, a separable dense $*$-subalgebra $\A\subset A$, and a regular self-adjoint odd operator $\D\colon\Dom\D\subset E\to E$ such that 
\begin{enumerate}
\item we have the inclusion $\pi(\A)\cdot\Dom\D\subset\Dom\D$, and the graded commutator $[\D,\pi(a)]_\pm$ is bounded on $\Dom\D$ for each $a\in\A$; 
\item the resolvent of $\D$ is \emph{locally compact}, i.e.\ $\pi(a) (\D\pm i)^{-1}$ is compact for each $a\in A$.
\end{enumerate}
An \emph{odd} unbounded Kasparov $A$-$B$-module $(\A,{}_{\pi}E_B,\D)$ is defined in the same way, except that $A$, $B$, and $E$ are assumed to be trivially graded, and $\D$ is not required to be odd. 

The $*$-homomorphism $\pi\colon A\to\End_B(E)$ is called \emph{non-degenerate} (or \emph{essential}) if $\pi(A)\cdot E$ is dense in $E$. 
If no confusion arises, we will usually write $(\A,E_B,\D)$ instead of $(\A,{}_{\pi}E_B,\D)$ and $a$ instead of $\pi(a)$. 
If $B=\C$ and $A$ is trivially graded, we will write $E=\mH$ and refer to $(\A,\mH,\D)$ as an (even or odd) \emph{spectral triple} over $A$ (see \cite{Connes94}). 
\end{defn}

Given a Kasparov module $(\A,E_B,\D)$, we will consider an approximate identity $\{\phi_k\}_{k\in\N}\subset\A$. As in the previous section, we will consider a perturbation of $\D$ by a locally bounded symmetric operator $M$ on $E$. In fact, we will assume a slightly stronger version of local boundedness: we require not only that $M\phi_k$ is bounded but also that $Ma$ is bounded for all $a\in\A$. 

The following theorem only applies to unbounded Kasparov modules for which there exists an adequate approximate identity which lies in the algebra. We note that every Kasparov class can be represented by such a module \cite[Proposition 4.18]{MR16}. 
However, not every (naturally occurring) unbounded Kasparov module admits such an approximate identity 
(consider, for instance, a Dirac-type operator on a manifold with boundary). 

\begin{thm}
\label{thm:Kasp_stab_loc_bdd}
Let $(\A,E_B,\D)$ be an (even or odd) unbounded Kasparov $A$-$B$-module, such that the $*$-homomorphism $\pi\colon A\to\End_B(E)$ is non-degenerate. Let $M$ be a closed symmetric operator on $E$ with $\deg M = \deg\D$ such that $a\cdot\Dom M\subset\Dom M$ and $Ma$ is a bounded operator for all $a\in\A$. Let $\{\phi_k\}_{k\in\N}\subset\A$ be an adequate approximate identity for $\D$ such that $\sup_{k\in\N}\|[M,\phi_k]\| < \infty$. Then $(\A,E_B,\bar{\D+M})$ is also an unbounded Kasparov $A$-$B$-module, and it represents the same class as $(\A,E_B,\D)$. 
\end{thm}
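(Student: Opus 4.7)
My plan has three stages: (i) establish that $\bar{\D+M}$ is regular, self-adjoint and satisfies the algebraic Kasparov axioms; (ii) prove local compactness of its resolvent; (iii) exhibit an operator homotopy between $\D$ and $\bar{\D+M}$.

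Stage (i) is a direct application of \cref{thm:reg_sa_sum_loc_bdd}. Both the density of $\Dom\D\cap\Dom M$ and the inclusion $\phi_k\cdot\Dom\D\subset\Dom M$ follow from $\phi_k\cdot E\subset\Dom\bar M=\Dom M$ (part 1 of \cref{lem:Dom_M}, since $M\phi_k$ is bounded and $M$ is closed). The remaining hypotheses of \cref{thm:reg_sa_sum_loc_bdd} are given or follow from the Kasparov module axioms, so its ``furthermore'' clause yields regularity and self-adjointness of $\bar{\D+M}$. For the algebraic axioms, for $a\in\A$ the graded commutator $[M,a]_\pm$ is bounded on $\Dom M$: $Ma$ is bounded by hypothesis, and $aM|_{\Dom M}$ is bounded by applying \cref{lem:Dom_M}(2) to $a^*\in\A$. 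Adding $[\D,a]_\pm$ gives a bounded graded commutator of $\D+M$ on $\Dom\D\cap\Dom M$; an argument in the spirit of \cref{lem:comm_D*} extends this boundedness and the invariance $a\cdot\Dom\bar{\D+M}\subset\Dom\bar{\D+M}$ to the closure.

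Stage (ii) is the main obstacle. Writing $R := (\bar{\D+M}+i)^{-1}$, I would derive a smoothed resolvent identity by approximating each $R\psi\in\Dom\bar{\D+M}$ by $\eta_n\in\Dom\D\cap\Dom M$ in the graph norm of $\bar{\D+M}$ and computing $(\D+i)\phi_k\eta_n$. The equality $\phi_k M\eta_n = (M\phi_k)^*\eta_n$ on $\Dom M$, together with boundedness of $M\phi_k$ and $[\D,\phi_k]$, makes the right-hand side converge, so by closedness of $\D$ one obtains $\phi_kR\psi\in\Dom\D$ and
\[
\phi_k R = (\D+i)^{-1}\phi_k - (\D+i)^{-1}(M\phi_k)^*\,R + (\D+i)^{-1}\,\overline{[\D,\phi_k]}\,R
\]
as adjointable operators on $E$. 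Multiplying by $a\in A$ on the left, each summand has the form $[a(\D+i)^{-1}]\cdot(\textup{bounded})$, hence is compact by the locally compact resolvent hypothesis for $\D$. Thus $a\phi_kR\in\End_B^0(E)$ for every $k$. To conclude $aR\in\End_B^0(E)$, I need to upgrade the strong convergence $\phi_k\to 1$ to norm convergence $a\phi_kR\to aR$; this is the crux of the difficulty, as strong convergence alone is insufficient. The resolution should use non-degeneracy of $\pi$ together with the fact that the modules for which this theorem is applied come with $\{\phi_k\}$ behaving as a genuine approximate unit in $\A$ (cf.\ \cite[Proposition 4.18]{MR16}), which lets one replace $a$ by $a\phi_j$ for large $j$ and pass $a\phi_j\to a$ in norm.

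For stage (iii), I would consider the straight-line homotopy $\D_t := \bar{\D+tM}$ for $t\in[0,1]$. Since the commutator bounds scale linearly in $t$, the hypotheses of the theorem hold at every $t$ with the same $\{\phi_k\}$, so stages (i)-(ii) produce an unbounded Kasparov module $(\A, E_B, \D_t)$ for each $t$. Norm continuity in $t$ of the bounded transform $F_t := \D_t(1+\D_t^2)^{-1/2}$ (after multiplication by $a\in A$) follows from the integral representation $F_t = \tfrac{2}{\pi}\int_0^\infty \D_t(1+\lambda^2+\D_t^2)^{-1}\,d\lambda$ combined with a second-resolvent identity, and exhibits $\D$ and $\bar{\D+M}$ as endpoints of an operator homotopy of bounded Kasparov modules. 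This gives the equality of classes in $\KK$.
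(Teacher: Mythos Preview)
Your Stage~(i) is correct and matches the paper.

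Stage~(ii) has a genuine gap, one you yourself flag: passing from ``$a\phi_kR$ compact for every $k$'' to ``$aR$ compact'' cannot be done under the stated hypotheses. The sequence $\{\phi_k\}$ is only assumed to converge strongly to $1$ on $E$; it is \emph{not} assumed to be an approximate unit for $A$, and your proposed fix imports exactly that extra assumption. The resolvent identity you derive is the right one, but you applied it with the wrong element. The paper runs the \emph{same} computation with an arbitrary $a_2\in\A$ in place of $\phi_k$: every $a_2\in\A$ preserves $\Dom\D$, has bounded $[\D,a_2]$, and has $a_2M$ bounded on $\Dom M$ (equal to $(Ma_2^*)^*$ there). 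Working on the dense set of $\psi$ with $R\psi\in\Dom\D\cap\Dom M$ one obtains
\[
a_1a_2\,R \;=\; a_1(\D+i)^{-1}\big(a_2 - a_2M\,R\big) + a_1(\D+i)^{-1}[\D,a_2]\,R ,
\]
which is compact since $a_1(\D+i)^{-1}$ is. Density of products $a_1a_2$ in $A$ then gives $aR\in\End_B^0(E)$ for all $a\in A$, with no limit in $k$ required.

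Stage~(iii) is a genuinely different strategy from the paper, and your sketch is underspecified. The paper does not build a homotopy; it recognises $(\A,E_B,\bar{\D+M})$ as an unbounded representative of the Kasparov product $1_A\otimes_A[(\A,E_B,\D)]$ and invokes Kucerovsky's criterion \cite[Theorem~13]{Kuc97}. The domain and positivity conditions are trivial (the operator on $A$ is zero), and the connection condition reduces to boundedness of $[\D,a]+Ma$ and $[\D,a^*]-a^*M$ for $a\in\A$, which is immediate from the hypotheses. Your straight-line path $\D_t=\bar{\D+tM}$ is natural, but the claimed norm-continuity of $a(F_t-F_s)$ is not a routine second-resolvent computation: on the common core $\D_t-\D_s=(t-s)M$, and without a domain inclusion $\Dom\D_s\subset\Dom M$ (which can fail; see the remark following \cref{thm:odd_Kasp_cpt_res}) the operator $M(\D_s+i)^{-1}$ need not even be defined, so the resolvent difference does not factor as written. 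Making this rigorous would again require inserting an element of $\A$ as in Stage~(ii), and even then the $\lambda$-integrability needs care. Kucerovsky's theorem is considerably shorter here.
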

\begin{proof}
The assumptions on $M$ imply that $M$ is locally bounded w.r.t.\ $\{\phi_k\}$. 
Since $M$ is closed, we know by \cref{lem:Dom_M} that $\A\cdot E\subset\Dom M$, so the intersection $\Dom\D\cap\Dom M$ contains the dense subset $\A\cdot\Dom\D$. Thus $\D+M$ is densely defined, and we know from \cref{thm:reg_sa_sum_loc_bdd} that $\bar{\D+M}$ is regular and self-adjoint. 
The commutator $[M,a] = Ma-aM$ equals $Ma-(Ma^*)^*$ (cf.\ \cref{lem:Dom_M}) and is therefore bounded for all $a\in\A$. 
It is then immediate that $\D+M$ has bounded commutators with $a\in\A$, and by \cref{lem:comm_D*} this implies that $\bar{\D+M}$ also has bounded commutators with $a\in\A$. 
We will show the local compactness of the resolvent of $\bar{\D+M}$. 
Let us write $\E := \Dom\D\cap\Dom M$ for the (initial) domain of $\D+M$. Since $\D+M$ is essentially self-adjoint, we know that $(\bar{\D+M}\pm i)\E$ is dense in $E$. For any $\psi\in(\bar{\D+M}\pm i)\E$ we have $(\bar{\D+M}\pm i)^{-1}\psi\in\E\subset\Dom\D$. For $a_1,a_2\in\A$, we can then rewrite
\begin{align*}
&a_1a_2 (\bar{\D+M}\pm i)^{-1} \psi = a_1a_2 (\D\pm i)^{-1} (\D\pm i) (\bar{\D+M}\pm i)^{-1} \psi \\
&= a_1 (\D\pm i)^{-1} a_2 (\D\pm i) (\bar{\D+M}\pm i)^{-1} \psi - a_1 \big[(\D\pm i)^{-1}, a_2\big] (\D\pm i) (\bar{\D+M}\pm i)^{-1} \psi \\
&= a_1 (\D\pm i)^{-1} \big(a_2 - a_2M (\bar{\D+M}\pm i)^{-1}\big) \psi + a_1 (\D\pm i)^{-1} \big[\D, a_2\big] (\bar{\D+M}\pm i)^{-1} \psi .
\end{align*}
Since we assumed that $a_1(\D\pm i)^{-1}$ is compact and since such $\psi$ are dense in $E$, it follows that $a_1a_2(\bar{\D+M}\pm i)^{-1}$ is compact. Because products $a_1a_2$ are dense in $A$, it then follows that $a(\bar{\D+M}\pm i)^{-1}$ is compact for all $a\in A$. Thus we have shown that $(\A,E_B,\bar{\D+M})$ is also an unbounded Kasparov $A$-$B$-module.

To prove that $(\A,E_B,\bar{\D+M})$ represents the same class as $(\A,E_B,\D)$, we will show that $(\A,E_B,\bar{\D+M})$ represents the Kasparov product of $1_A = [(A,0)] \in \KK(A,A)$ with $[(\A,E_B,\D)] \in \KK(A,B)$. 
For this purpose we need to check the three conditions in Kucerovsky's theorem 
\cite[Theorem 13]{Kuc97}.
Since we are considering the zero operator on $A$, the second and third of Kucerovsky's conditions are trivial. For the first condition we need to show that the commutator
$$
\left[ \mattwo{\bar{\D+M}}{0}{0}{\D} , \mattwo{0}{T_a}{T_a^*}{0} \right] 
$$
is bounded on $\Dom\bar{\D+M}\oplus\Dom\D$ for all $a$ in a dense subset of $A$. 

We have the isomorphism $A\otimes_AE \simeq E$. 
For $e\in E$, the operator $T_a\colon e\mapsto a\otimes e$ is then simply given by left multiplication with $a$, and its adjoint $T_a^*$ is left multiplication by $a^*$. We have $a\cdot\Dom\D\subset\Dom\D\cap\Dom M\subset\Dom\bar{\D+M}$ and $a^*\cdot(\Dom\D\cap\Dom M)\subset\Dom\D$, so that the following commutator is well-defined and equal to
$$
\left[ \mattwo{\D+M}{0}{0}{\D} , \mattwo{0}{a}{a^*}{0} \right] = \mattwo{0}{[\D,a]+Ma}{{[\D,a^*]}-a^*M}{0} ,
$$
which is bounded on $(\Dom\D\cap\Dom M)\oplus\Dom\D$ for all $a\in\A\subset A$. Since $\Dom\D\cap\Dom M$ is a core for $\bar{\D+M}$, this commutator in fact extends to a bounded operator on $\Dom\bar{\D+M}\oplus\Dom\D$ (see \cref{lem:comm_D*}). Hence Kucerovsky's first condition holds. 
\end{proof}

\begin{example}
\label{eg:classical}
Let $\D$ be as in \cref{eg:potential}. If $\D$ is furthermore elliptic, then $(C_c^\infty(X), L^2(\bV), \bar\D)$ is a spectral triple. 
Then for any symmetric $T\in L^\infty_{\textnormal{loc}}(\End\bV)$ we have that $(C_c^\infty(X), L^2(\bV), \bar{\D+T})$ is also a spectral triple which represents the same K-homology class. 

More concretely, consider the standard odd spectral triple $(C_c^\infty(\R), L^2(\R), \bar{i\partial_x})$ over the real line. 
For any real-valued $f\in L^\infty_{\textnormal{loc}}(\R)$, denote by $M_f$ the operator of multiplication by $f$ on the Hilbert space $L^2(\R)$. We then find that the odd spectral triple $(C_c^\infty(\R), L^2(\R), \bar{i\partial_x+M_f})$ represents the same K-homology class as the standard odd spectral triple $(C_c^\infty(\R), L^2(\R), \bar{i\partial_x})$. This generalises a previous result in \cite[\S5.3]{vdDPR13}, where the equivalence of these spectral triples was shown for the special case $f(x)=x$. 
\end{example}

\section{Odd KK-theory}
\label{sec:odd_KK}

In this section we will apply \cref{thm:Kasp_stab_loc_bdd} to the odd version(s) of unbounded $\KK$-theory, but first we shall have a look at the bounded case. 
For trivially graded $C^*$-algebras $A$ and $B$, there are two types of (bounded) representatives for a class in the odd $\KK$-theory $\KK^1(A,B) = \KK(A\otimes\CCliff_1,B)$:
\begin{enumerate}
\item an \emph{odd} Kasparov $A$-$B$-module $(\A,E_B,F)$ (where the Hilbert module $E$ is trivially graded);
\item an (even) Kasparov $A\otimes\CCliff_1$-$B$-module $(\A\otimes\CCliff_1,\til E_B,\til F)$.\footnote{In fact, a third option is to consider a Kasparov $A$-$B\otimes\CCliff_1$-module, which is equivalent by the periodicity of the $\KK$-groups.}
\end{enumerate}
These two perspectives are equivalent 
(see \cite[Prop.\ IV.A.13]{Connes94}), 
which can be shown as follows. 
Given an odd module $(\A,{}_\pi E_B,F)$, one can construct an (even) Kasparov $A\otimes\CCliff_1$-$B$-module $(\A\otimes\CCliff_1,{}_{\til\pi}\til E_B,\til F)$ by setting
\begin{align}
\label{eq:doubling}
\til E &= E\oplus E , & \til\pi &= \pi\oplus\pi , & \til F &= \mattwo{0}{-iF}{iF}{0} , & \gamma &= \mattwo{1}{0}{0}{-1} , & e &= \mattwo{0}{1}{1}{0} ,
\end{align}
where $\gamma$ is the grading operator on $\til E$, and $e$ denotes the generator of $\CCliff_1$. We observe that the operator $F$ anti-commutes with $e$. 
Conversely, given an (even) Kasparov $A\otimes\CCliff_1$-$B$-module $(\A\otimes\CCliff_1,{}_{\til\pi}\til E_B,\til F)$, the graded Hilbert module $\til E$ decomposes as $\til E^+\oplus\til E^-$, and we may identify $E:=\til E^+$ with $\til E^-$ using the Clifford generator $e\in\CCliff_1$. 
Thus, up to unitary equivalence, this Kasparov module is of the form
\begin{align}
\label{eq:even_mod_Cliff}
\til E &= E\oplus E , & \til\pi &= \pi\oplus\pi , & \til F &= \mattwo{0}{F_-}{F_+}{0} , & \gamma &= \mattwo{1}{0}{0}{-1} , & e &= \mattwo{0}{1}{1}{0} ,
\end{align}
where $E$ is a Hilbert $B$-module with a $*$-homomorphism $\pi\colon A\to\End_B(E)$, $\gamma$ is the grading operator on $\til E$, and $e$ is the generator of $\CCliff_1$. We point out that the operator $\til F$ in general does not commute with the Clifford generator. 
Since the graded commutators of $\til F$ with the algebra $A\otimes\CCliff_1$ are compact, one finds that $[F_\pm,a]$ and $a(F_++F_-)$ are compact for all $a\in A$. In particular, this implies that $\frac12a(\til F+e\til Fe)$ is compact. 
Since $a(\til F-\til F^*)$ is compact, we also know that $a(F_\pm-F_\mp^*)$ is compact. 
Using these properties, one can check that $\til F':=\frac12(\til F-e\til Fe)$ also yields a Kasparov module which is 
a locally compact perturbation of $\til F$, and therefore (see \cite[Proposition 17.2.5]{Blackadar98}) $\til F'$ is operator-homotopic to $\til F$. 
Then the operator $F := -\frac i2(F_+-F_-)$ yields an \emph{odd} Kasparov $A$-$B$-module $(\A,{}_\pi E_B,F)$ which represents the same class. 

Let us now consider the case of unbounded representatives. Again, 
we consider two types of unbounded representatives for a class in the odd $\KK$-theory $\KK^1(A,B) = \KK(A\otimes\CCliff_1,B)$:
\begin{enumerate}
\item an \emph{odd} unbounded Kasparov $A$-$B$-module $(\A,E_B,\D)$ (where the Hilbert module $E$ is trivially graded);
\item an (even) unbounded Kasparov $A\otimes\CCliff_1$-$B$-module $(\A\otimes\CCliff_1,\til E_B,\til\D)$.
\end{enumerate}
Of course, these two perspectives are again equivalent, because their bounded transforms are equivalent. However, the question remains whether there is a natural, canonical way of implementing this equivalence while working only in the unbounded picture (i.e.\ without using the bounded transform). 
From an odd unbounded Kasparov $A$-$B$-module one constructs an (even) unbounded Kasparov $A\otimes\CCliff_1$-$B$-module as in \cref{eq:doubling}. Conversely, given an unbounded Kasparov $A\otimes\CCliff_1$-$B$-module $(\A\otimes\CCliff_1,\til E_B,\til\D)$, one would need to show that $\til\D':=\frac12(\til\D-e\til\D e)$ represents the same Kasparov class. However, in general it is not even clear if $\til\D'$ is regular self-adjoint. 
In this section we will prove that $\til\D'$ is regular self-adjoint and represents the same class as $\til\D$ whenever there exists an adequate approximate identity. 

Let us fix our notation. 
Let $(\A\otimes\CCliff_1,\til E_B,\til\D)$ be an (even) unbounded Kasparov $A\otimes\CCliff_1$-$B$-module, such that the $*$-homomorphism $\til\pi\colon A\to\End_B(\til E)$ is non-degenerate and commutes with the action of $\CCliff_1$. 
As in \cref{eq:even_mod_Cliff}, this Kasparov module is (up to unitary equivalence) of the form
\begin{align*}
\til E &= E\oplus E , & \til\pi &= \pi\oplus\pi , & \til\D &= \mattwo{0}{\D_-}{\D_+}{0} , & \gamma &= \mattwo{1}{0}{0}{-1} , & e &= \mattwo{0}{1}{1}{0} ,
\end{align*}
where $\Dom\til\D = \Dom\D_+\oplus\Dom\D_-$ and $\D_\pm^* = \D_\mp$. 
The operator $\til\D$ does not need to anti-commute with $e$ (we only know that $\til\D$ has \emph{bounded} graded commutators with the algebra). 
On the domain $\Dom\D_+\cap\Dom\D_-$ we define the operators
\begin{align}
\label{eq:D-M}
\D &:= - \frac i2 (\D_+-\D_-) , & M &:= \frac12 (\D_++\D_-) .
\end{align}
On $\E := (\Dom\D_+\cap\Dom\D_-)^{\oplus2}$ we define symmetric operators $\til\D'$ and $\til M$ by
\begin{align}
\label{eq:tilD'}
\til\D' &:= \mattwo{0}{-i\D}{i\D}{0} , & \til M &:= \mattwo{0}{M}{M}{0} , & \til\D|_\E &= \til\D'+ \til M = \mattwo{0}{-i\D+M}{i\D+M}{0} .
\end{align}
If we wish to reduce this module to an \emph{odd} unbounded Kasparov $A$-$B$-module $(\A,E_B,\D)$, we need to show that we can remove the operator $M$, without changing the underlying class in $\KK$-theory. 
If the algebra $\A$ is unital, then the assumption that the anti-commutator $[\til\D,1\otimes e]_\pm$ is bounded implies that $\til M$ is bounded. Hence $\til\D'$ is only a bounded perturbation of $\til\D$, and we know that unbounded Kasparov modules are stable under bounded perturbations. However, if $\A$ is non-unital, the operator $\til M$ can be unbounded. Nevertheless, similar reasoning shows that $\til M$ must be \emph{locally} bounded. With this observation, the following result is a straightforward consequence of \cref{thm:Kasp_stab_loc_bdd}.

\begin{thm}
\label{thm:Kasp_Cliff_anti-comm}
Let $A$ and $B$ be trivially graded $C^*$-algebras. Consider an (even) unbounded Kasparov $A\otimes\CCliff_1$-$B$-module $(\A\otimes\CCliff_1,\til E_B,\til\D)$, such that the $*$-homo\-mor\-phism $\til\pi\colon A\to\End_B(\til E)$ is non-degenerate and 
commutes with the action of $\CCliff_1$. 
Suppose that $\A$ contains an adequate approximate identity for $\til\D$. 
Then the operator $\til\D'$ defined in \cref{eq:tilD'} also yields an unbounded Kasparov module $(\A\otimes\CCliff_1,\til E_B,\bar{\til\D'})$ which represents the same Kasparov class as $(\A\otimes\CCliff_1,\til E_B,\til\D)$. 
\end{thm}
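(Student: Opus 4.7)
The strategy is to apply the stability result \cref{thm:Kasp_stab_loc_bdd} with the perturbation $-\bar{\til M}$, and then identify $\bar{\til\D-\bar{\til M}}$ with $\bar{\til\D'}$. The main work is to verify that $\til M$ satisfies the hypotheses of \cref{thm:Kasp_stab_loc_bdd}. Since $e^2=1$, we have $\til M = \frac12(\til\D+e\til\D e)$ on $\E$, so that for $a\in\A$ (which commutes with $e$) a direct manipulation yields the identities
\begin{align*}
\til M(ae) &= \tfrac12\bigl([\til\D,ae]_\pm + e[\til\D,a]\bigr), & [\til M,\phi_k] &= \tfrac12\bigl([\til\D,\phi_k]+e[\til\D,\phi_k]e\bigr).
\end{align*}
Because $a\otimes1$ and $a\otimes e$ both lie in $\A\otimes\CCliff_1$ and $\til\D$ has bounded graded commutators with this algebra, both right-hand sides are bounded. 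Hence $\til M a=\til M(ae)\cdot e$ is bounded for every $a\in\A$, and $\sup_k\|[\til M,\phi_k]\|\le\sup_k\|[\til\D,\phi_k]\|<\infty$.

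Next I would verify the remaining domain hypotheses. The inclusions $(\phi_k\otimes 1)\cdot\Dom\til\D\subset\Dom\til\D$ and $(\phi_k\otimes e)\cdot\Dom\til\D\subset\Dom\til\D$ together force $\phi_k\cdot\Dom\D_\pm\subset\Dom\D_+\cap\Dom\D_-$, so $\phi_k\cdot\Dom\til\D\subset\E=\Dom\til M$; the same reasoning shows $a\cdot\Dom\til M\subset\Dom\til M$ for each $a\in\A$, whence \cref{lem:Dom_M} gives $a\cdot E\subset\Dom\bar{\til M}$. Taking $\{\phi_k\otimes 1\}$ as the adequate approximate identity in $\A\otimes\CCliff_1$ and applying \cref{thm:Kasp_stab_loc_bdd} with $\D=\til\D$ and $M=-\bar{\til M}$, we obtain an unbounded Kasparov module $(\A\otimes\CCliff_1,\til E_B,\bar{\til\D-\bar{\til M}})$ representing the same class as $(\A\otimes\CCliff_1,\til E_B,\til\D)$.

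It remains to identify this operator with $\bar{\til\D'}$. The ``furthermore'' clause of \cref{thm:reg_sa_sum_loc_bdd}, applied with $\D=\til\D$ and the (possibly non-closed) $M=-\til M$, shows that $\bar{\til\D-\til M}$ is itself regular self-adjoint; since $(\til\D-\til M)|_\E=\til\D'$, this closure coincides with $\bar{\til\D'}$. As $\bar{\til\D'}\subset\bar{\til\D-\bar{\til M}}$ and both operators are self-adjoint, they agree, transferring the Kasparov class statement to $\bar{\til\D'}$. The principal obstacle is the first step: finding the algebraic rewritings above, which convert $\til M a$ and $[\til M,\phi_k]$ into bounded expressions involving only graded commutators of $\til\D$ with elements of $\A\otimes\CCliff_1$. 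Once these identities are secured, the theorem reduces to invocations of the earlier stability and local-global results.
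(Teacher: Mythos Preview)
Your proposal is correct and follows essentially the same route as the paper: verify that $\til M$ satisfies the hypotheses of \cref{thm:Kasp_stab_loc_bdd} (boundedness of $\til Ma$ and uniform boundedness of $[\til M,\phi_k]$), apply that theorem to obtain $(\A\otimes\CCliff_1,\til E_B,\bar{\til\D-\bar{\til M}})$, and then invoke the ``furthermore'' clause of \cref{thm:reg_sa_sum_loc_bdd} together with $\phi_k\cdot\Dom\til\D\subset\E$ to identify this with $\bar{\til\D'}$. The only cosmetic difference is that you derive the key bounds directly from the identity $\til M=\tfrac12(\til\D+e\til\D e)$, whereas the paper writes out the $2\times2$ matrix entries of $[\til\D,a\otimes1]_\pm$ and $[\til\D,a\otimes e]_\pm$ to extract that $[\D_\pm,a]$ and $Ma$ are bounded.
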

\begin{proof}
First note that for all $a$ in a dense subalgebra $\A\subset A$ we have $(a\otimes 1)\cdot\Dom\til\D\subset\Dom\til\D$ and $(a\otimes e)\cdot\Dom\til\D\subset\Dom\til\D$, which implies that $a\cdot\Dom\D_\pm\subset\Dom\D_+\cap\Dom\D_-$. Since we assumed that the $*$-homomorphism $\pi\colon A\to\End_B(E)$ is non-degenerate, the subset $\A\cdot\Dom\D_\pm$ is dense in $E$, and it follows that $\Dom\D_+\cap\Dom\D_-$ is also dense. Hence $\til\D'$ and $\til M$ are densely defined on the domain $(\Dom\D_+\cap\Dom\D_-)^{\oplus2}$, and they are symmetric because $\D_\pm^* = \D_\mp$. 

The graded commutators $[\til\D,a\otimes1]_\pm$ and $[\til\D,a\otimes e]_\pm$ are bounded for all $a\in\A$. 
The first commutator equals 
$$
[\til\D,a\otimes1]_\pm = \left[ \mattwo{0}{\D_-}{\D_+}{0} , \mattwo{a}{0}{0}{a} \right] = \mattwo{0}{{[\D_-,a]}}{{[\D_+,a]}}{0} , 
$$
which shows that the commutators $[\D_\pm,a]$ are bounded for all $a\in\A$. Next, we have the anti-commutator
\begin{align*}
[\til\D,a\otimes e]_\pm &= \left\{ \mattwo{0}{\D_-}{\D_+}{0} , \mattwo{0}{a}{a}{0} \right\} = \mattwo{\D_-a+a\D_+}{0}{0}{\D_+a+a\D_-} 
= \mattwo{2Ma-[\D_+,a]}{0}{0}{2Ma-[\D_-,a]} ,
\end{align*}
which shows that furthermore $Ma$ is bounded for all $a\in\A$, and hence $\til Ma$ is also bounded for all $a\in\A$. 

By assumption there exists an adequate approximate identity $\{\phi_k\}_{k\in\N}\subset\A$ for $\til\D$. The uniform bound $\sup_{k\in\N}\|[\til\D,\phi_k]\|<\infty$ implies that $\sup_{k\in\N}\|[\D_\pm,\phi_k]\|<\infty$ and hence that $\sup_{k\in\N}\|[\til M,\phi_k]\|<\infty$. 
From \cref{thm:Kasp_stab_loc_bdd} we then know that we have an unbounded Kasparov module $(\A\otimes\CCliff_1,\til E_B,\bar{\til\D-\bar{\til M}})$ which represents the same Kasparov class as $(\A\otimes\CCliff_1,\til E_B,\til\D)$. Finally, since $\phi_k\cdot\Dom\D_\pm\subset\Dom\D_+\cap\Dom\D_-$, we have $\phi_k\cdot\Dom\til\D\subset\Dom\til M$, so from \cref{thm:reg_sa_sum_loc_bdd} we know that $\bar{\til\D-\bar{\til M}} = \bar{\til\D-\til M} = \bar{\til\D'}$, which completes the proof. 
\end{proof}

\begin{coro}
With the same assumptions as in \cref{thm:Kasp_Cliff_anti-comm}, and with the operators $\D$ and $M$ defined as in \cref{eq:D-M}, we obtain two \emph{odd} unbounded Kasparov $A$-$B$-modules $(\A,E_B,\bar\D)$ and $(\A,E_B,\bar{\D+M})$, which both represent the same Kasparov class as $(\A\otimes\CCliff_1,\til E_B,\til\D)$. 
\end{coro}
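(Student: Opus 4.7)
The plan is twofold: first, we show that $(\A,E_B,\bar\D)$ is an odd unbounded Kasparov module in the correct class, and then we deduce the statement for $(\A,E_B,\bar{\D+M})$ as a locally bounded perturbation of $\bar\D$. The organising observation is that the operator $\til\D'$ of \eqref{eq:tilD'} is unitarily equivalent, via the unitary
\[
W = \frac{1}{\sqrt 2}\mattwo{1}{1}{i}{-i}
\]
on $\til E = E\oplus E$, to the block-diagonal operator $\D\oplus(-\D)$, and $W$ commutes with the diagonal representation $\pi\oplus\pi$ of $A$; consequently $\bar{\til\D'}$ corresponds under $W$ to $\bar\D\oplus(-\bar\D)$, so that $\bar{\til\D'}$ is exactly the doubling up of $\bar\D$ as in \eqref{eq:doubling}.

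For the first stage, regular self-adjointness of $\bar\D$ is immediate from the regular self-adjointness of $\bar{\til\D'}$ supplied by \cref{thm:Kasp_Cliff_anti-comm}, via this unitary equivalence. The bounded commutators $[\bar\D,a]$ for $a\in\A$ come from the off-diagonal form
\[
[\til\D,a\otimes 1] = \mattwo{0}{[\D_-,a]}{[\D_+,a]}{0} ,
\]
which gives boundedness of $[\D_\pm,a]$ and hence of $[\D,a] = -\tfrac{i}{2}([\D_+,a]-[\D_-,a])$; these bounds extend to $\bar\D$ by \cref{lem:comm_D*}. Local compactness of $a(\bar\D\pm i)^{-1}$ transports from $(a\oplus a)(\bar{\til\D'}\pm i)^{-1}$ through $W$, and the latter is compact because $\bar{\til\D'}$ lies in the same Kasparov class as $\til\D$ and therefore has locally compact resolvent. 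Hence $(\A,E_B,\bar\D)$ is an odd unbounded Kasparov module. Its doubling up is exactly $(\A\otimes\CCliff_1,\til E_B,\bar{\til\D'})$, and under the natural isomorphism $\KK^1(A,B)\simeq\KK(A\otimes\CCliff_1,B)$ implemented by doubling up, the class of $(\A,E_B,\bar\D)$ corresponds via \cref{thm:Kasp_Cliff_anti-comm} to that of $(\A\otimes\CCliff_1,\til E_B,\til\D)$.

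For the second stage, we apply \cref{thm:Kasp_stab_loc_bdd} to $(\A,E_B,\bar\D)$ with perturbation $\bar M$. The approximate identity $\{\phi_k\}\subset\A$ for $\til\D$ descends to an adequate approximate identity for $\bar\D$: since $\phi_k\cdot\Dom\D_\pm \subset \Dom\D_+\cap\Dom\D_- = \Dom\D$ (noted in the proof of \cref{thm:Kasp_Cliff_anti-comm}), one has $\phi_k\cdot\Dom\D\subset\Dom\D$, and the uniform bound on $[\til\D,\phi_k\otimes 1]$ yields the uniform bound on $[\D,\phi_k]$; \cref{lem:comm_D*} then promotes these statements to $\bar\D$. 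The conditions $a\cdot\Dom M\subset\Dom M$ and boundedness of $Ma$ for $a\in\A$, as well as $\sup_k\|[M,\phi_k]\|<\infty$, were all obtained in the proof of \cref{thm:Kasp_Cliff_anti-comm} from the anti-commutator $[\til\D,a\otimes e]_\pm$. Hence \cref{thm:Kasp_stab_loc_bdd} produces $(\A,E_B,\bar{\bar\D+\bar M})$ as an unbounded Kasparov module in the same class as $(\A,E_B,\bar\D)$. Since $\Dom\D = \Dom M = \Dom\D_+\cap\Dom\D_-$, the inclusion $\phi_k\cdot\Dom\D\subset\Dom M$ is automatic, so the final clause of \cref{thm:reg_sa_sum_loc_bdd} identifies $\bar{\bar\D+\bar M}$ with $\bar{\D+M}$. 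The main obstacle in the proof is the transport of regular self-adjointness and of the locally compact resolvent from $\bar{\til\D'}$ to $\bar\D$; the explicit unitary $W$ reduces this to a direct computation, and the remainder of the argument is bookkeeping built on top of \cref{thm:Kasp_Cliff_anti-comm,thm:Kasp_stab_loc_bdd,thm:reg_sa_sum_loc_bdd}.
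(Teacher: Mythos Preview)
Your proof is correct and follows the same two-step strategy as the paper: first obtain the odd module $(\A,E_B,\bar\D)$ by restricting $\bar{\til\D'}$ (the paper phrases this as ``$\bar{\til\D'}$ anti-commutes with the Clifford generator, so we can restrict the even module to the odd module''; your explicit unitary $W$ makes this reduction concrete), and then apply \cref{thm:Kasp_stab_loc_bdd} to perturb $\bar\D$ by $M$. One minor caveat: when you invoke the final clause of \cref{thm:reg_sa_sum_loc_bdd} with the regular self-adjoint operator $\bar\D$, the literal hypothesis required is $\phi_k\cdot\Dom\bar\D\subset\Dom M$ rather than $\phi_k\cdot\Dom\D\subset\Dom M$; this is harmless since $\Dom\D$ is a core for $\bar\D$ (so one may choose the approximants $\eta_n$ in the proof of \cref{thm:reg_sa_sum_loc_bdd} from $\Dom\D\hodot_B\mH^\pi$), and the paper's one-line proof glosses over this point entirely.
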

\begin{proof}
For $(\A,E_B,\bar\D)$ the statement follows from \cref{thm:Kasp_Cliff_anti-comm} by observing that $\bar{\til\D'}$ anti-commutes with the Clifford generator, so we can restrict the even module $(\A\otimes\CCliff_1,\til E_B,\bar{\til\D'})$ to the odd module $(\A,E_B,\bar\D)$. For $(\A,E_B,\bar{\D+M})$ the statement follows again from \cref{thm:Kasp_stab_loc_bdd}.
\end{proof}

\begin{example}
Let $f\in L^\infty_{\textnormal{loc}}(\R)$ (see also \cref{eg:classical}). 
We then find that 
$$
\left( C_c^\infty(\R)\otimes\CCliff_1, L^2(\R) \oplus L^2(\R) , \bar\D = \mattwo{0}{\bar{\partial_x+M_f}}{\bar{-\partial_x+M_f}}{0} \right) 
$$
is an even spectral triple which 
represents the same K-homology class as the equivalent odd spectral triples $(C_c^\infty(\R), L^2(\R), \bar{i\partial_x+M_f})$ and $(C_c^\infty(\R), L^2(\R), \bar{i\partial_x})$. 
\end{example}

\section{Unbounded multipliers}
\label{sec:unbdd_mult}

A typical example of a locally bounded operator on $E$ would be an unbounded multiplier on a non-unital $C^*$-algebra $A\subset\End_B(E)$. 
In this section we will study this typical example in more detail. We will show that, given the existence of a suitable approximate identity, an unbounded Kasparov module is stable under perturbations by unbounded multipliers. In \cref{sec:cpt_res} we will apply this result to obtain an explicit construction of an unbounded multiplier such that the perturbed operator has compact resolvent. 

The typical case we have in mind is when the unbounded multiplier is even (e.g.\ if $A$ is trivially graded), which means we cannot use the unbounded multiplier as a perturbation of an odd operator (i.e., in an even unbounded Kasparov module). For this reason, we (initially) consider only \emph{odd} unbounded Kasparov modules. 

\begin{defn}
Let $A$ be a $C^*$-algebra. An \emph{unbounded multiplier} on $A$ is a linear map $m\colon\Dom m\to A$, where $\Dom m$ is a dense right ideal in $A$, which satisfies $m(ab) = (ma)b$ for all $a\in\Dom m$ and $b\in A$. An unbounded multiplier $m$ is called \emph{symmetric} if $(ma)^*b = a^*(mb)$ for all $a,b\in\Dom m$. 
\end{defn}

Let $E_B$ be a Hilbert $B$-module, and suppose we have a non-degenerate $*$-homomorphism $A\to\End_B(E)$. 
Then an unbounded multiplier $m$ on $A$ defines a densely defined operator $M$ on $E_B$ with initial domain $\Dom M := \Dom m\cdot E$ by $M(a\psi) := (ma)\psi$ (see \cite[Proposition 10.7 \& Lemma 10.8]{Lance95}). 
If $m$ is symmetric, then $M$ is also symmetric. 
By construction, $Ma$ is bounded for any $a\in\Dom m$.  
The following statement is then an immediate consequence of \cref{thm:Kasp_stab_loc_bdd}. 

\begin{coro}
\label{coro:unbdd_mult}
Let $(\A,E_B,\D)$ be an odd unbounded Kasparov $A$-$B$-module, such that the $*$-homomorphism $A\to\End_B(E)$ is non-degenerate. 
Let $m$ be a symmetric unbounded multiplier with $\A\subset\Dom m$, and denote by $M$ the corresponding operator on $E$. 
Suppose that $\A$ contains an adequate approximate identity $\{\phi_k\}_{k\in\N}$ for $\D$, such that $\sup_{k\in\N} \|[m,\phi_k]\| < \infty$. 
Then $(\A,E_B,\bar{\D+M})$ is also an unbounded Kasparov $A$-$B$-module, and it represents the same class as $(\A,E_B,\D)$. 
\end{coro}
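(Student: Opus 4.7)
The plan is to verify, hypothesis by hypothesis, that \cref{thm:Kasp_stab_loc_bdd} applies to $\bar M$ (the closure of $M$), whence the conclusion is immediate.

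The non-structural ingredients are already in place: the paragraph preceding the corollary records that $M$ is densely defined and symmetric on the initial domain $\Dom m\cdot E$, with $Ma$ bounded for every $a\in\Dom m\supset\A$; taking the closure gives a closed symmetric operator $\bar M$, and trivial gradings make $\deg\bar M=\deg\D=0$. What requires attention is the domain condition $a\cdot\Dom\bar M\subset\Dom\bar M$ for $a\in\A$ and the commutator bound $\sup_k\|[\bar M,\phi_k]\|<\infty$.

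For the domain condition, I would decompose each $a\in\A$ as a linear combination of self-adjoint elements of $\A$ (using that $\A$ is a $*$-subalgebra) and apply \cref{lem:Dom_M} to each summand; this uses only that $Ma$ is bounded, and yields $a\cdot E\subset\Dom\bar M$, hence a fortiori $a\cdot\Dom\bar M\subset\Dom\bar M$, together with the boundedness of $\bar M a$. For the commutator bound, the multiplier identity $m(\phi_k a)=(m\phi_k)a$ (valid for $\phi_k\in\Dom m$ and $a\in A$) gives, for $a\in\Dom m$ and $\psi\in E$,
\begin{align*}
[M,\phi_k](a\psi) &= \bigl((m\phi_k)a-\phi_k(ma)\bigr)\psi = \bigl([m,\phi_k]\cdot a\bigr)\psi ,
\end{align*}
so $[M,\phi_k]$ is implemented on its initial domain by left multiplication with $[m,\phi_k]$, and the hypothesis yields $\sup_k\|[M,\phi_k]\|\le\sup_k\|[m,\phi_k]\|<\infty$; \cref{lem:comm_D*} then transfers this bound to $\bar M$.

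With all hypotheses in hand, \cref{thm:Kasp_stab_loc_bdd} produces an unbounded Kasparov module $(\A,E_B,\bar{\D+\bar M})$ in the same class as $(\A,E_B,\D)$. To obtain $\bar{\D+M}$ as stated in the corollary rather than $\bar{\D+\bar M}$, I would invoke the final clause of \cref{thm:reg_sa_sum_loc_bdd}: for $\phi_k\in\A\subset\Dom m$ one has $\phi_k\cdot\Dom\D\subset\Dom m\cdot E=\Dom M$, which is precisely the condition ensuring $\bar{\D+M}=\bar{\D+\bar M}$. The main obstacle is the mild bookkeeping around the unbounded multiplier, in particular the transition from $M$ to $\bar M$; \cref{lem:Dom_M} handles this step cleanly and is what makes the argument essentially a direct application of the stability theorem.
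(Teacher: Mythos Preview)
Your proposal is correct and follows the same route the paper indicates: the corollary is obtained as a direct application of \cref{thm:Kasp_stab_loc_bdd}. Your treatment is in fact more thorough than the paper's one-line justification, since you make explicit the passage to the closure $\bar M$ (required because \cref{thm:Kasp_stab_loc_bdd} assumes a closed perturbation) and then recover $\bar{\D+M}=\bar{\D+\bar M}$ via the final clause of \cref{thm:reg_sa_sum_loc_bdd}.
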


\begin{remark}
In the above corollary, the only compatibility assumption between the approximate identity $\{\phi_k\}_{k\in\N}\subset\A$ and the unbounded multiplier $m$ is that $\sup_{k\in\N} \|[m,\phi_k]\| < \infty$. We note that if $\A$ is unital, every multiplier is in fact bounded and this assumption holds automatically. Furthermore, if $\A$ is non-unital but commutative, these commutators equal zero and the assumption therefore also holds automatically. Hence, this assumption is only relevant when the algebra $\A$ is both non-unital and non-commutative. 

Let us provide an example where this compatibility assumption fails.
Consider the $C^*$-algebra $C_0(\R)$. Let $m\in C(\R)$ be an unbounded multiplier, and let $\{\phi_k\}_{k\in\N}\subset\Dom m$ be an approximate identity. We consider the algebra of $2\times2$-matrices over $C_0(\R)$ with unbounded multiplier and approximate identity given by
\begin{align*}
\til m &:= \mattwo{0}{im}{-im}{0} , & \til\phi_k &:= \mattwo{1}{\frac1k}{\frac1k}{1} \phi_k .
\end{align*}
The commutator is then given by
$$
[\til m,\til\phi_k] = \frac{2i}k \mattwo{m}{0}{0}{-m} \phi_k .
$$
By choosing $m$ to approach infinity sufficiently fast, we can ensure that there is no uniform bound on $\{\frac1k m\phi_k\}_{k\in\N}$. Hence we see that the sequence $\{[\til m,\til\phi_k]\}_{k\in\N}$ in general need \emph{not} be uniformly bounded in $k$. 
\end{remark}

\subsection{Compactness of the resolvent}
\label{sec:cpt_res}

If $(\A,E_B,\D)$ is an unbounded Kasparov $A$-$B$-module for a non-unital $C^*$-algebra $A$, then in general the resolvent of $\D$ is only \emph{locally} compact. In practice, it can be much easier to deal with operators whose resolvent is in fact compact. 
In this section we address the following question: under which conditions can we find a locally bounded perturbation such that the perturbed operator has compact resolvent?
In fact, we will construct this locally bounded perturbation explicitly as an unbounded multiplier built from a given approximate identity. 

\noindent
\textbf{Standing Assumptions.}
Let $A$ and $B$ be trivially graded $C^*$-algebras, and suppose that $A$ is separable. 
Let $(\A,E_B,\D)$ be an odd unbounded Kasparov $A$-$B$-module, such that the representation $A\to\End_B(E)$ is non-degenerate. Let $\{\phi_k\}_{k\in\N}\subset\A$ be a commutative\footnote{I.e.\ $[\phi_k,\phi_m]=0$ for all $k,m\in\N$.} approximate unit for $A$ and an adequate approximate identity for $\D$. Without loss of generality, we assume we are given a countable total subset $\{a_j\}_{j\in\N}$ of $\A$ such that $\phi_k \in \Span\{a_j\}_{j\in\N}$ and $\|(\phi_{k+1}-\phi_k)a_j\| < 4^{-k}$ for all $j<k$. 

\begin{lem}
\label{lem:mult_vanish_inf}
The series
$m := \sum_{k\in\N} 2^k(\phi_{k+1}-\phi_k)$ gives a well-defined symmetric unbounded multiplier on $A$ such that $\A\cap\Dom m$ is dense in $A$, $[m,\phi_k] = 0$ (for all $k\in\N$), and $(\bar m\pm i)^{-1}$ lies in $A$. 
\end{lem}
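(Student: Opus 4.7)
The proof establishes four assertions --- well-definedness of $m$, denseness of $\A\cap\Dom m$, the vanishing $[m,\phi_k]=0$, and $(\bar m\pm i)^{-1}\in A$ --- three of which are essentially bookkeeping. For the bookkeeping: the bound $\|(\phi_{k+1}-\phi_k)a_j\|<4^{-k}$ (for $j<k$) gives $\|2^k(\phi_{k+1}-\phi_k)a_j\|<2^{-k}$, so the partial sums $m_N a:=\sum_{k=0}^{N-1}2^k(\phi_{k+1}-\phi_k)a$ are Cauchy in $A$ for $a\in\Span\{a_j\}_{j\in\N}\subseteq\A$, defining $m$ on a dense subspace of $\A$ (hence of $A$). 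Continuity of right multiplication gives $m(ab)=(ma)b$; self-adjointness of each $\phi_k$ passed through the limit gives $(ma)^*b=a^*(mb)$; and because every $m_N$ lies in the commutative $*$-subalgebra $C^*(\phi_j : j\in\N)$, the identity $[m_N,\phi_k]=0$ passes to the limit, giving $[m,\phi_k]=0$.

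The substantive claim is $(\bar m\pm i)^{-1}\in A$. My plan is to work inside the commutative sub-$C^*$-algebra $C:=C^*(\phi_k : k\in\N)\subseteq A$ and exploit Gelfand duality $C\cong C_0(Y)$ for some locally compact Hausdorff $Y$. Each $m_N$ lies in $C$ and $C$ is norm-closed in $A$, so $m$ restricts to a densely defined unbounded multiplier on $C$, and any inverse built inside $C$ automatically sits in $A$. After the standard reduction to a positive, increasing commutative approximate unit $0\leq\phi_k\leq\phi_{k+1}\leq 1$ (which can be arranged simultaneously with the decay property and adequacy for $\D$), the Gelfand transforms $f_k\in C_0(Y)$ satisfy $f_k\to 1$ uniformly on compact subsets of $Y$, and $m(y)=\sum_k 2^k(f_{k+1}(y)-f_k(y))$ defines a non-negative lower-semicontinuous function $Y\to[0,\infty]$.

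The decisive estimate, valid on $\{m\leq N\}$ by positivity of the summands, is the telescoping inequality
\[
  1-f_K(y) \;=\; \sum_{k\geq K}\bigl(f_{k+1}(y)-f_k(y)\bigr) \;\leq\; 2^{-K}\sum_{k\geq K}2^k\bigl(f_{k+1}(y)-f_k(y)\bigr) \;\leq\; N\cdot 2^{-K}.
\]
Hence $\{m\leq N\}\subseteq\{f_K\geq 1-N\cdot 2^{-K}\}$, and the latter set is compact in $Y$ once $1-N\cdot 2^{-K}>0$, because $f_K\in C_0(Y)$. So $m$ is proper on $Y$ and the pointwise inverse $r(y):=(m(y)+i)^{-1}$ (set to $0$ on $\{m=\infty\}$) is continuous and vanishes at infinity, placing $r\in C_0(Y)=C\subseteq A$. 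A direct verification, using commutativity of $r$ with each $\phi_k$ inside $C$ together with evaluation in irreducible representations of $A$ (each of which restricts to a character of $C$), then shows $ra\in\Dom\bar m$ with $(\bar m+i)(ra)=a$ for every $a\in A$, and $r(\bar m+i)a=a$ for $a\in\Dom\bar m$. This simultaneously yields essential self-adjointness of $m$, regularity of $\bar m$, and $(\bar m+i)^{-1}=r\in A$; the $(\bar m-i)^{-1}$ case is identical. The main obstacle is the properness of $m$ on $Y$, which is exactly where the geometric $2^k$ weights against the telescoping decay of $1-f_K$ earn their keep.
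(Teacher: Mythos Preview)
Your approach is essentially the paper's: pass to the commutative $C^*$-subalgebra $C=C^*(\phi_k:k\in\N)$, use Gelfand duality $C\cong C_0(Y)$, and show $(\bar m\pm i)^{-1}\in C_0(Y)\subseteq A$ via a growth estimate for $m$ on $Y$. Your telescoping inequality is precisely the contrapositive of the paper's inequality $m(x)\geq(1-t)2^k$ on $\{\phi_k<t\}$ (which the paper imports from \cite{MR16}). Two points deserve attention.

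First, the ``standard reduction'' to $0\leq\phi_k\leq\phi_{k+1}\leq 1$ is not a reduction you are allowed to make: $m$ is defined from the \emph{given} $\phi_k$, so replacing them changes the statement of the lemma. Fortunately no reduction is needed. The Standing Assumptions call $\{\phi_k\}$ an approximate unit for the $C^*$-algebra $A$, which by the usual convention means positive, increasing, and contractive; just invoke that.

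Second, and more substantively, continuity of your $r$ does not follow from the argument as written. You only know $m$ is lower semicontinuous (as an increasing pointwise limit of continuous functions), and an lsc function can jump to $+\infty$ on a nonempty set, making $r$ discontinuous there. The paper avoids this issue by a different order of operations: it first argues that $(m_n\pm i)^{-1}$ is strictly Cauchy with limit $(\bar m\pm i)^{-1}$ inside $M(C)=C_b(Y)$, so continuity of the limit is automatic, and only then uses the growth estimate to obtain vanishing at infinity. Your route can be repaired directly: since each $\phi_l\in\Span\{a_j\}$, the decay hypothesis gives $\|(\phi_{k+1}-\phi_k)\phi_l\|\leq C_l\,4^{-k}$ for all large $k$, hence $m\phi_l\in C$; writing $m=(m\phi_l)/f_l$ on $\{f_l>0\}$ shows $m$ is continuous (and finite) there, and these sets cover $Y$.

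Finally, your claim that every irreducible representation of $A$ restricts to a character of $C$ is false unless $C$ is central in $A$, which is not assumed; that sentence should be dropped. The identification $r=(\bar m+i)^{-1}$ is more cleanly obtained, as the paper does, by recognising $r$ as the strict limit of $(m_n+i)^{-1}$.
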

\begin{proof}
Our argument roughly follows (part of) the proof of \cite[Theorem 1.25]{MR16}, to which we refer for more details. 
The unbounded multiplier $m$ is defined on 
\[
\Dom m := \Big\{ a\in A : \sum_{k\in\N} 2^k(\phi_{k+1}-\phi_k) a \text{ is norm-convergent in $A$} \Big\} .
\]
First one checks that $a_j\in\Dom m$, which shows that $m$ is densely defined, and in particular that $\A\cap\Dom m$ is dense in $A$. 
Since $\phi_k=\phi_k^*$, we know that $m$ is symmetric, and since $\{\phi_k\}_{k\in\N}$ is commutative, we have $[m,\phi_k]=0$. 

Consider the truncations $m_n := \sum_{k=1}^n 2^k(\phi_{k+1}-\phi_k) \in \A$. Let $B$ be the commutative $C^*$-algebra generated by $\{\phi_k\}_{k\in\N}$. Since $m_n\in B$, we also have $(m_n\pm i)^{-1}\in B$. Furthermore, the sequence $(m_n\pm i)^{-1}$ is strictly Cauchy, and therefore its limit $(\bar m\pm i)^{-1}$ lies in $M(B)$. 
By Gelfand-Naimark duality, there exists a locally compact Hausdorff space $X$ such that $B = C_0(X)$. 
Fix $0<t<1$, and consider the increasing sequence of compact sets $X_k := \{ x\in X : \phi_k(x) \geq t \}$ such that $X = \bigcup X_k$. For $x\in X\backslash X_k$ we have the inequality (see the proof of \cite[Theorem 1.25]{MR16})
\[
\sum_{n=0}^\infty 2^n (\phi_{n+1}(x)-\phi_n(x)) \geq (1-t) 2^k ,
\]
which shows that $(\bar m\pm i)^{-1} \in C_0(X) \subset A$. 
\end{proof}

The following lemma is a consequence of the closed graph theorem. A proof of this statement for Hilbert spaces can be found for instance in \cite[Lemma 8.4]{Sch12}. 
\begin{lem}
\label{lem:dom_inc_rel_bdd}
Let $S$ be a closed operator on a Hilbert $B$-module $E$, and let $T$ be a closable operator such that $\Dom S\subset\Dom T$. Then $T$ is relatively bounded by $S$. 
\end{lem}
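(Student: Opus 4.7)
The plan is to adapt the standard Hilbert space argument (closed graph theorem) to the Hilbert module setting. Since every Hilbert $B$-module is in particular a Banach space, the Banach space version of the closed graph theorem applies without modification; the closability of $T$ and the inclusion $\Dom S \subset \Dom T$ are what make the graph of $T$, regarded as a map out of $\Dom S$ with the graph norm, closed.

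First I would equip $\Dom S$ with the graph norm $\|\psi\|_S := \sqrt{\|\psi\|^2 + \|S\psi\|^2}$ (coming from the $B$-valued graph inner product $\la\psi|\psi'\ra + \la S\psi|S\psi'\ra$); since $S$ is closed, $(\Dom S, \|\cdot\|_S)$ is a Hilbert $B$-module, and in particular a Banach space. Then I would view $T$ as a $B$-linear map $T\colon (\Dom S,\|\cdot\|_S) \to E$, which makes sense because $\Dom S \subset \Dom T$.

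The key step is checking that this map has closed graph. Suppose $\psi_n \in \Dom S$ with $\psi_n \to \psi$ in graph norm (so $\psi_n \to \psi$ and $S\psi_n \to S\psi$ in $E$), and suppose $T\psi_n \to \eta$ in $E$. Since $T$ is closable, the convergences $\psi_n \to \psi$ and $T\psi_n \to \eta$ force $\psi \in \Dom \bar T$ and $\bar T\psi = \eta$. Because $\psi \in \Dom S \subset \Dom T$, we get $T\psi = \bar T\psi = \eta$, so the graph is closed.

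By the closed graph theorem for Banach spaces, $T$ is bounded with respect to the graph norm, i.e.\ there exists a constant $C>0$ such that
\[
\|T\psi\| \leq C\|\psi\|_S \leq C\bigl(\|\psi\| + \|S\psi\|\bigr)
\qquad\text{for all }\psi\in\Dom S,
\]
which is the relative boundedness of $T$ by $S$. I do not foresee any real obstacle: the only mild subtlety is to make sure the closed graph theorem is being invoked in a purely Banach-space fashion, so that no Hilbert-space-specific dual argument is required.
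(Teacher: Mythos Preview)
Your proof is correct and follows essentially the same route as the paper: equip $\Dom S$ with the graph norm, verify via closability of $T$ that $T|_{\Dom S}$ has closed graph, and invoke the closed graph theorem to conclude boundedness. The paper's argument is identical in substance, only phrased as showing that $T|_{\Dom S}\colon\Dom S\to E$ is a closed everywhere-defined operator before applying the closed graph theorem.
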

\begin{proof}
We consider $\Dom S$ as a Hilbert module equipped with the graph norm of $S$, and we denote by $\bar T$ the closure of $T$. 
We will show that $T|_{\Dom S}\colon\Dom S\to E$ is closed. 
Consider a sequence $\psi_n\in\Dom S$ which converges to $\psi\in\Dom S$ (with respect to the graph norm of $S$) such that $T\psi_n$ converges in $E$. Since $T$ is closable (and $\psi_n\to\psi$ in $E$), we know that $T\psi_n$ converges to $\bar T\psi$. But $\psi\in\Dom S\subset\Dom T$, so $\bar T\psi =  T\psi$. 
Hence $T|_{\Dom S}\colon\Dom S\to E$ is a closed everywhere defined operator. 
The closed graph theorem then implies that $T|_{\Dom S}$ is bounded. 
\end{proof}

\begin{thm}
\label{thm:odd_Kasp_cpt_res}
Let $(\A,E_B,\D)$ and $\{\phi_k\}\subset\A$ be as in the Standing Assumptions. Suppose that 
$\|[\D,\phi_k]\| < 4^{-k}$ for all $k$. 
Let $M$ be the unbounded operator on $E$ corresponding to the unbounded multiplier $m := \sum_{k\in\N} 2^k(\phi_{k+1}-\phi_k)$. 
Write $\A_m := \A\cap\Dom m$. 
Then the operator
\[
\til\D' := \mattwo{0}{-i\D+\bar M}{i\D+\bar M}{0}
\]
yields an unbounded Kasparov $A\otimes\CCliff_1$-$B$-module $(\A_m\otimes\CCliff_1,(E\oplus E)_B,\bar{\til\D'})$ representing the same class as $(\A,E_B,\D)$, and furthermore $\bar{\til\D'}$ has compact resolvent.
\end{thm}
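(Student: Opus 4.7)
The plan is to set $\E:=\Dom\D\cap\Dom\bar M$ and to decompose $\til\D'$ on the dense initial domain $\E^{\oplus 2}$ as $\til\D' = \til\D_0 + \til M$, where
\[
\til\D_0 := \mattwo{0}{-i\D}{i\D}{0}, \qquad \til M := \mattwo{0}{\bar M}{\bar M}{0},
\]
and then to invoke \cref{thm:Kasp_stab_loc_bdd}. The doubling-up construction of \cref{sec:odd_KK} turns $(\A_m\otimes\CCliff_1, (E\oplus E)_B, \til\D_0)$ into an even unbounded Kasparov $A\otimes\CCliff_1$-$B$-module representing the same $\KK^1$-class as $(\A,E_B,\D)$. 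The hypotheses of \cref{thm:Kasp_stab_loc_bdd} for the perturbation $\til M$ are routine to check: $\{\phi_k\otimes 1\}$ is an adequate approximate identity for $\til\D_0$; for $a\in\A_m$ and $c\in\CCliff_1$, the inclusion $(a\otimes c)\cdot\Dom\til M\subset\Dom\til M$ and the boundedness of $\til M(a\otimes c)$ both follow from $ma\in A$; and $[\til M,\phi_k\otimes 1]=0$ by the commutativity of the approximate identity together with $[m,\phi_k]=0$ (from \cref{lem:mult_vanish_inf}). This identifies $(\A_m\otimes\CCliff_1, (E\oplus E)_B, \bar{\til\D'})$ as an unbounded Kasparov module representing the same class.

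The remaining task is to prove that $(\bar{\til\D'}+i)^{-1}$ is compact. My first step is to show that $\til M$ is $\bar{\til\D'}$-relatively bounded. The hypothesis $\|[\D,\phi_k]\|<4^{-k}$ makes the series $[\D,m]=\sum_k 2^k[\D,\phi_{k+1}-\phi_k]$ norm-convergent, so the anti-commutator
\[
\{\til\D_0,\til M\}=\mattwo{-i[\D,\bar M]}{0}{0}{i[\D,\bar M]}
\]
is bounded. Expanding $\la(\til\D'+i)\xi,(\til\D'+i)\xi\ra_B$ for $\xi\in\E^{\oplus 2}$, the positivity of $\la\til\D_0\xi,\til\D_0\xi\ra_B$ together with the boundedness of $\{\til\D_0,\til M\}$ yields $\|\til M\xi\|^2\leq c\,\|(\til\D'+i)\xi\|^2$ for some constant $c>0$. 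A Cauchy-sequence argument on a graph-norm-approximating sequence, combined with the closedness of $\til M$, extends this inequality from $\E^{\oplus 2}$ to all of $\Dom\bar{\til\D'}$; in particular $\Dom\bar{\til\D'}\subset\Dom\til M$ and $\til M(\bar{\til\D'}+i)^{-1}$ is bounded. Taking adjoints shows that $(\bar{\til\D'}+i)^{-1}(\til M+i)$ extends to a bounded adjointable operator $R$ on $E\oplus E$, giving the factorization
\[
(\bar{\til\D'}+i)^{-1} = R\cdot(\til M+i)^{-1}.
\]

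To finish, I observe that by \cref{lem:mult_vanish_inf} each matrix entry of $(\til M+i)^{-1}$---namely $-i(\bar M^2+1)^{-1}$ or $\bar M(\bar M^2+1)^{-1}$---lies in $A$, via continuous functional calculus applied to $(\bar m\pm i)^{-1}\in A$. Since $\{\phi_k\}$ is an approximate unit for $A$, we have $(\til M+i)^{-1}(1-\phi_k\otimes 1)\to 0$ in operator norm, and therefore
\[
(\bar{\til\D'}+i)^{-1}(\phi_k\otimes 1) = (\bar{\til\D'}+i)^{-1} - R\cdot(\til M+i)^{-1}(1-\phi_k\otimes 1) \longrightarrow (\bar{\til\D'}+i)^{-1}
\]
in operator norm. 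On the other hand, $(\bar{\til\D'}+i)^{-1}(\phi_k\otimes 1)=\bigl((\phi_k\otimes 1)(\bar{\til\D'}-i)^{-1}\bigr)^*$ is compact by the Kasparov module property already established. Hence $(\bar{\til\D'}+i)^{-1}$ is a norm-limit of compact adjointable operators, and so itself compact. The main technical obstacle will be the relative bound $\Dom\bar{\til\D'}\subset\Dom\til M$: the key inequality has to be established on a core and then propagated by a Cauchy-type argument, since the weak-compactness methods that suffice in Hilbert spaces fail for general Hilbert modules. Everything else reduces to a bookkeeping application of the earlier stability results and the approximate-unit trick.
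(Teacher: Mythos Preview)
Your proof is correct and follows the same overall strategy as the paper: invoke \cref{thm:Kasp_stab_loc_bdd} for the Kasparov-module statement, establish the domain inclusion $\Dom\bar{\til\D'}\subset\Dom\til M$ via the boundedness of the anti-commutator $\{\til\D_0,\til M\}$, and then use the approximate-unit trick together with $(\bar m\pm i)^{-1}\in A$ to upgrade local compactness to compactness.

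The one place where your argument differs is how the domain inclusion is obtained. The paper does not carry out the quadratic-form estimate; instead it observes (quoting \cite[Theorem 1.25]{MR16}) that $[\D,\bar M]$ is bounded on $\Ran(\D\pm i)^{-1}(\bar M\pm i)^{-1}$, and then invokes the ``weakly anticommuting'' closedness result \cite[Theorem 6.1.8]{Mes14} (or \cite[Proposition 7.7]{KL12}) to conclude directly that $\pm i\D+\bar M$ is already closed on $\Dom\D\cap\Dom\bar M$, whence $\Dom\bar{\til\D'}=\Dom\til\D_0\cap\Dom\til M$. The relative bound then follows from the closed-graph lemma (\cref{lem:dom_inc_rel_bdd}). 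Your route is more self-contained but needs a little care: the cross term $\langle\til\D_0\xi,\til M\xi\rangle+\langle\til M\xi,\til\D_0\xi\rangle$ cannot be rewritten as $\langle\xi,\{\til\D_0,\til M\}\xi\rangle$ on all of $\E^{\oplus2}$ without knowing $\til M\xi\in\Dom\til\D_0$ or $\til\D_0\xi\in\Dom\til M$. The clean fix is to run the estimate with the bounded truncations $\til M_n$ (built from $m_n=\sum_{k\le n}2^k(\phi_{k+1}-\phi_k)\in\A$), for which $\{\til\D_0,\til M_n\}$ is uniformly bounded, and then pass to the limit on the core $(\Dom\D\cap\Dom M)^{\oplus2}$ where $M_n\to M$; your Cauchy-sequence extension to $\Dom\bar{\til\D'}$ then goes through exactly as you describe. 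With that caveat, both arguments arrive at the same conclusion by the same mechanism.
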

\begin{proof}
From \cref{lem:mult_vanish_inf} we know that $\A_m$ is dense in $A$, $[m,\phi_k] = 0$ (for all $k\in\N$), and $(\bar m\pm i)^{-1}$ lies in $A$. In particular, we can replace $\A$ by $\A_m$ without affecting the underlying Kasparov class. Define the closed operators
\begin{align*}
\til\D &:= \mattwo{0}{-i\D}{i\D}{0} , & \til M &:= \mattwo{0}{\bar M}{\bar M}{0} .
\end{align*}
The first statement then follows from \cref{thm:Kasp_stab_loc_bdd}. 
We need to check that $\bar{\til\D'}$ has compact resolvent. 
We will first show that $\Dom(\bar{\til\D+\til M}) \subset \Dom\til M$. 
By assumption, we have $\|[\D,\phi_k]\| < 4^{-k}$. 
Using the same argument as in the proof of \cite[Theorem 1.25]{MR16}, one shows that the commutator $[\D,\bar M]$ is well-defined and bounded on $\Ran(\D\pm i)^{-1}(\bar M\pm i)^{-1}$. 
It then follows from 
\cite[Theorem 6.1.8]{Mes14} (see also \cite[Proposition 7.7]{KL12}) 
that $(\pm i\D+\bar M)^* = \mp i\D+\bar M$, and in particular $\pm i\D+\bar M$ is closed on the domain $\Dom\D\cap\Dom\bar M$. Hence the operator $\til\D+\til M$ is closed on $\Dom\til\D\cap\Dom\til M$, which implies that $\Dom(\bar{\til\D+\til M})\subset\Dom\til M$, as desired. 

We know that $\til\D+\til M$ has locally compact resolvent, so in particular the operator $\phi_k(\til\D+\til M\pm i)^{-1}$ is compact. 
Consider the inequality
\[
\|(1-\phi_k)(\til\D+\til M\pm i)^{-1}\| 
\leq \|(1-\phi_k)(\bar m\pm i)^{-1}\| \, \| (\bar M\pm i)(\til M\pm i)^{-1} \| \, \|(\til M\pm i)(\til\D+\til M\pm i)^{-1}\| .
\]
Since $(\bar m\pm i)^{-1}$ lies in $A$ and $\phi_k$ is an approximate unit in $A$, the first factor on the right-hand-side converges to zero (as $k\to\infty$). 
By \cref{lem:dom_inc_rel_bdd}, the domain inclusion $\Dom(\bar{\til\D+\til M}) \subset \Dom\til M$ implies that $\til M$ is relatively bounded by $\til\D+\til M$, so the third factor is bounded. Similarly, the second factor is bounded because $\Dom\til M = \Dom\bar M\oplus\Dom\bar M$. 
It then follows that the resolvent $(\til\D+\til M\pm i)^{-1}$ is the norm limit of the compact operators $\phi_k(\til\D+\til M\pm i)^{-1}$, and therefore $\til\D+\til M$ has compact resolvent.
\end{proof}

\begin{remark}
With the assumptions of the above theorem, consider (the closure of) the operator $\D+M$. If we have the domain inclusion $\Dom\bar{\D+M} \subset \Dom\bar M$, then the same argument as in the above theorem shows that $(\A,E_B,\bar{\D+M})$ is an odd unbounded Kasparov $A$-$B$-module representing the same class as $(\A,E_B,\D)$, and that $\bar{\D+M}$ has compact resolvent. 
However, in general the domain inclusion $\Dom\bar{\D+M} \subset \Dom\bar M$ might not hold. 
\end{remark}

We prove a similar result for the case of \emph{even} unbounded Kasparov modules. Again, we need to `double up' the module (although this is somewhat less natural in the even case) to obtain the aforementioned domain inclusion. 
So, let $A$ and $B$ now be $\Z_2$-graded $C^*$-algebras, and consider an (even) unbounded Kasparov $A$-$B$-module $(\A,E_B,\D)$. Consider the unbounded Kasparov module $(M_2(\C),\C^2,0)$, where the $\Z_2$-grading on $\C^2=\C\oplus\C$ is such that the first summand is even and the second summand is odd. 
Then the (external) Kasparov product with $\alpha_2 := [(M_2(\C),\C^2,0)] \in\KK(M_2(\C),\C)$ implements the isomorphism \cite[\S5, Theorem 1]{Kas80b}
\[
\hot\alpha_2 \colon \KK(A,B) \xrightarrow{\simeq} \KK^2(A,B) \simeq \KK(M_2(\C)\hot A,B) ,
\]
where we have identified $\CCliff_2 = M_2(\C)$. In other words, the unbounded Kasparov product of $(\A,E_B,\D)$ with $(M_2(\C),\C^2,0)$, given by $(\A\hot M_2(\C),E_B\hot\C^2,\D\hot1)$ (where $\hot$ denotes the $\Z_2$-graded tensor product), represents the same class as $(\A,E_B,\D)$. This procedure provides us with a similar doubling trick as in the odd case, and we can prove the following.

\begin{thm}
Let $(\A,E_B,\D)$ be an (even) unbounded Kasparov $A$-$B$-module. 
Suppose that the representation $A\to\End_B(E)$ is non-degenerate. Let $\{\phi_k\}_{k\in\N}\subset\A$ be as in the Standing Assumptions, and assume that each $\phi_k$ is even. 
Suppose that $\|[\D,\phi_k]\| < 4^{-k}$ for all $k$. 
Let $M$ be the unbounded operator on $E$ corresponding to the unbounded multiplier $m := \sum_{k\in\N} 2^k(\phi_{k+1}-\phi_k)$. 
Write $\A_m := \A\cap\Dom m$. 
Then the operator
\begin{align*}
\til\D' &:= \D\hot1 + \bar M\hot e , & e &= \mattwo{0}{1}{1}{0} ,
\end{align*}
yields an unbounded Kasparov $A\hot M_2(\C)$-$B$-module $(\A_m\hot M_2(\C),E_B\hot\C^2,\til\D')$ representing the same class as $(\A,E_B,\D)$, and furthermore $\til\D'$ has compact resolvent.
\end{thm}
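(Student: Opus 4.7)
The plan is to adapt the proof of \cref{thm:odd_Kasp_cpt_res} to the even setting, using the doubled-up Kasparov module $(\A\hot M_2(\C),E\hot\C^2,\D\hot 1)$ (which represents the same class as $(\A,E_B,\D)$ via the isomorphism $\hot\alpha_2$) in place of the direct-sum construction used in the odd case. By \cref{lem:mult_vanish_inf}, the multiplier $m = \sum_{k\in\N} 2^k(\phi_{k+1}-\phi_k)$ is symmetric, satisfies $(\bar m\pm i)^{-1}\in A$, and commutes with each $\phi_k$. Since the $\phi_k$ are even, the corresponding operator $\bar M$ on $E$ is even, and $\bar M\hot e$ is a closed odd symmetric operator on $E\hot\C^2$ of the same degree as $\D\hot 1$. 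I would then verify the hypotheses of \cref{thm:Kasp_stab_loc_bdd} for $\D\hot 1$ with perturbation $\bar M\hot e$ and approximate identity $\{\phi_k\hot 1\}$: local boundedness follows because $(\bar M\hot e)(a_0\hot a_1) = \pm (ma_0)\hot(ea_1)$ is bounded for $a_0\in\A_m$ and $a_1\in M_2(\C)$; and $[\bar M\hot e,\phi_k\hot 1] = [\bar M,\phi_k]\hot e = 0$, providing the required uniform bound. This yields the Kasparov module $(\A_m\hot M_2(\C),E\hot\C^2,\bar{\til\D'})$ representing the same class as $(\A,E_B,\D)$.

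The main obstacle is then proving the domain inclusion $\Dom\bar{\til\D'}\subset\Dom(\bar M\hot e)$, which underpins the resolvent estimate. A direct computation using the Koszul sign rule yields the graded anti-commutator $[\D\hot 1,\bar M\hot e]_\pm = [\D,\bar M]\hot e$. The assumption $\|[\D,\phi_k]\| < 4^{-k}$, combined with the series expansion $\bar M = \sum 2^k(\phi_{k+1}-\phi_k)$, gives the absolute norm-convergence
\[
\sum_{k\in\N} 2^k \|[\D,\phi_{k+1}-\phi_k]\| \leq 2\sum_{k\in\N} 2^{-k} < \infty,
\]
so that $[\D,\bar M]$ extends to a bounded operator on $E$, by the same telescoping argument as in \cite[Theorem~1.25]{MR16}. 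Consequently $\D\hot 1$ and $\bar M\hot e$ are two odd symmetric operators with bounded graded anti-commutator, and \cite[Theorem~6.1.8]{Mes14} (equivalently \cite[Proposition~7.7]{KL12}) shows that $\til\D'$ is essentially self-adjoint on $\Dom(\D\hot 1)\cap\Dom(\bar M\hot e)$. Thus $\Dom\bar{\til\D'}\subset\Dom(\bar M\hot e) = \Dom(\bar M\hot 1)$, and \cref{lem:dom_inc_rel_bdd} makes $\bar M\hot 1$ relatively bounded by $\til\D'$.

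To conclude compactness of the resolvent, I observe that $\phi_k\hot 1\in A\hot M_2(\C)$, so $(\phi_k\hot 1)(\til\D'\pm i)^{-1}$ is compact by the Kasparov module property established in paragraph one. The factorisation
\[
(1-\phi_k\hot 1)(\til\D'\pm i)^{-1} = \bigl((1-\phi_k)(\bar m\pm i)^{-1}\hot 1\bigr)\cdot\bigl((\bar M\hot 1 \pm i)(\til\D'\pm i)^{-1}\bigr)
\]
then yields the estimate: the first factor tends to zero in norm since $(\bar m\pm i)^{-1}\in A$ and $\phi_k$ is an approximate unit for $A$, while the second factor is bounded by the relative boundedness just established. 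Hence $(\til\D'\pm i)^{-1}$ is the norm limit of compact operators and therefore itself compact, completing the proof.
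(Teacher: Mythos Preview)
Your proposal is correct and follows essentially the same approach as the paper: compute the graded anti-commutator $[\D\hot1,\bar M\hot e]_\pm = [\D,\bar M]\hot e$, use the summability of $\|[\D,\phi_k]\|$ to show $[\D,\bar M]$ is bounded, invoke \cite[Theorem~6.1.8]{Mes14}/\cite[Proposition~7.7]{KL12} to obtain the domain inclusion, and then run the same norm-approximation argument as in \cref{thm:odd_Kasp_cpt_res}. One small point of phrasing: the cited results give that $\til\D'$ is already \emph{closed} (indeed regular self-adjoint) on $\Dom(\D\hot1)\cap\Dom(\bar M\hot e)$, not merely essentially self-adjoint there; it is this closedness that yields $\Dom\bar{\til\D'}=\Dom(\D\hot1)\cap\Dom(\bar M\hot e)$ and hence the required inclusion.
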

\begin{proof}
The idea is similar to \cref{thm:odd_Kasp_cpt_res}; the main difference is the presence of $\Z_2$-gradings. The anti-commutator of $\til\D:=\D\hot1$ and $\til M:=\bar M\hot e$ is given by $\{ \D\hot1 , \bar M\hot e \} = [\D,\bar M]\hot e$. We know from the proof of \cref{thm:odd_Kasp_cpt_res} that $[\D,\bar M]$ is bounded. Hence $\{\til\D,\til M\}$ is bounded, which by \cite[Theorem 6.1.8]{Mes14} (see also \cite[Proposition 7.7]{KL12}) implies that $\til\D'=\til\D+\til M$ is closed on $\Dom\til\D\cap\Dom\til M$. In particular, we have the domain inclusion $\Dom(\bar{\til\D+\til M}) \subset \Dom\til M$. The remainder of the argument is as in the proof of \cref{thm:odd_Kasp_cpt_res}.
\end{proof}

\begin{remark}
As in \cref{sec:odd_KK}, we can obtain a converse to the `doubling up' procedure which replaces $(\A,E_B,\D)$ by $(\A\hot M_2(\C),E_B\hot\C^2,\D\hot1)$. More precisely, given any unbounded Kasparov $A\hot M_2(\C)$-$B$-module $(\A\hot M_2(\C),E_B\hot\C^2,\til\D)$ (where $A$ acts non-degenerately and $M_2(\C)$ acts via the standard representation) and an adequate approximate identity for $\til\D$, one can show that $\til\D$ is equal to the sum of a self-adjoint operator $\D\hot1$ and a locally bounded symmetric operator. We leave the details to the reader. 
\end{remark}

\newcommand{\MR}[1]{}


\begin{thebibliography}{{\noopsort{Dungen}}DPR13}

\bibitem[BJ83]{BJ83}
S.~{Baaj} and P.~{Julg}, \emph{{Th\'eorie bivariante de {K}asparov et
  op\'erateurs non born\'es dans les {$C^{\ast} $}-modules hilbertiens}}, C. R.
  Acad. Sci. Paris S\'er. I Math. \textbf{296} (1983), 875--878.

\bibitem[Bla98]{Blackadar98}
B.~Blackadar, \emph{K-theory for operator algebras}, 2nd ed., Math. Sci. Res.
  Inst. Publ., Cambridge University Press, 1998.

\bibitem[Che73]{Che73}
P.~R. Chernoff, \emph{Essential self-adjointness of powers of generators of
  hyperbolic equations}, J. Funct. Anal. \textbf{12} (1973), no.~4, 401--414.

\bibitem[Con94]{Connes94}
A.~Connes, \emph{{Noncommutative Geometry}}, Academic Press, San Diego, CA,
  1994.

\bibitem[{\noopsort{Dungen}}DPR13]{vdDPR13}
K.~{\noopsort{Dungen}}{van den}~Dungen, M.~{Paschke}, and A.~{Rennie},
  \emph{{Pseudo-Riemannian spectral triples and the harmonic oscillator}}, J.
  Geom. Phys. \textbf{73} (2013), 37--55.

\bibitem[FMR14]{FMR14}
I.~{Forsyth}, B.~{Mesland}, and A.~{Rennie}, \emph{{Dense domains, symmetric
  operators and spectral triples}}, New York J. Math. \textbf{20} (2014),
  1001--1020.

\bibitem[GL83]{GL83}
M.~Gromov and H.~B. Lawson, \emph{Positive scalar curvature and the {D}irac
  operator on complete {R}iemannian manifolds}, Publ. Math. Inst. Hautes Etudes
  Sci. \textbf{58} (1983), no.~1, 83--196.

\bibitem[HR00]{Higson-Roe00}
N.~Higson and J.~Roe, \emph{{Analytic K-Homology}}, Oxford University Press,
  New York, 2000.

\bibitem[{Kas}80]{Kas80b}
G.~G. {Kasparov}, \emph{{The operator K-functor and extensions of
  {$C^{\ast}$}-algebras}}, Izv. Akad. Nauk SSSR \textbf{44} (1980), 571--636.

\bibitem[KL12]{KL12}
J.~Kaad and M.~Lesch, \emph{{A local global principle for regular operators in
  Hilbert {$C^{\ast} $}-modules}}, J. Funct. Anal. \textbf{262} (2012), no.~10,
  4540--4569.

\bibitem[KL13]{KL13}
\bysame, \emph{Spectral flow and the unbounded {K}asparov product}, Adv. Math.
  \textbf{248} (2013), 495--530. \MR{3107519}

\bibitem[Kuc97]{Kuc97}
D.~Kucerovsky, \emph{The {KK}-product of unbounded modules}, K-Theory
  \textbf{11} (1997), 17--34.

\bibitem[Lan95]{Lance95}
E.~Lance, \emph{Hilbert {$C^{\ast} $}-modules: A toolkit for operator
  algebraists}, Lecture note series: London Mathematical Society, Cambridge
  University Press, 1995.

\bibitem[{Mes}14]{Mes14}
B.~{Mesland}, \emph{{Unbounded bivariant K-theory and correspondences in
  noncommutative geometry}}, J. Reine Angew. Math. \textbf{691} (2014),
  101--172.

\bibitem[MR16]{MR16}
B.~Mesland and A.~Rennie, \emph{Nonunital spectral triples and metric
  completeness in unbounded {KK}-theory}, J. Funct. Anal. \textbf{271} (2016),
  no.~9, 2460--2538.

\bibitem[Pie06]{Pie06}
F.~Pierrot, \emph{Op\'erateurs r\'eguliers dans les {$C^*$}-modules et
  structure des {$C^*$}-alg\`ebres de groupes de {L}ie}, J. Lie Theory
  \textbf{16} (2006), no.~4, 651--689.

\bibitem[Sch12]{Sch12}
K.~Schm{\"u}dgen, \emph{Unbounded self-adjoint operators on {H}ilbert space},
  Graduate Texts in Mathematics, vol. 265, Springer Netherlands, 2012.

\end{thebibliography}

\providecommand{\noopsort}[1]{}\providecommand{\vannoopsort}[1]{}
\providecommand{\bysame}{\leavevmode\hbox to3em{\hrulefill}\thinspace}
\providecommand{\MR}{\relax\ifhmode\unskip\space\fi MR }
\providecommand{\MRhref}[2]{%
  \href{http://www.ams.org/mathscinet-getitem?mr=#1}{#2}
}
\providecommand{\href}[2]{#2}

\end{document}